\newtheorem{theorem}[subsection]{Theorem}
\newtheorem{corollary}[subsection]{Corollary}
\newtheorem{lemma}[subsection]{Lemma}
\newtheorem{remark}[subsection]{Remark}
\newcommand{\cA}{\mathcal{A}}
\newcommand{\cB}{\mathcal{B}}
\newcommand{\cO}{\mathcal{O}}
\title{General sharp upper bounds on the total coalition number}
\author{János Barát\thanks{Research supported by ERC Advanced Grant ”GeoScape” and the National Research, Development and Innovation Office, grant K-131529.} \\
\small Alfr\'ed R\'enyi Institute of Mathematics\\
\small University of Pannonia, Department of Mathematics\\
\small 8200 Veszprém, Egyetem utca 10., Hungary\\
\small \url{barat@renyi.hu} \\
and\\
Zoltán L. Blázsik\thanks{\protect\includegraphics[height=1cm]{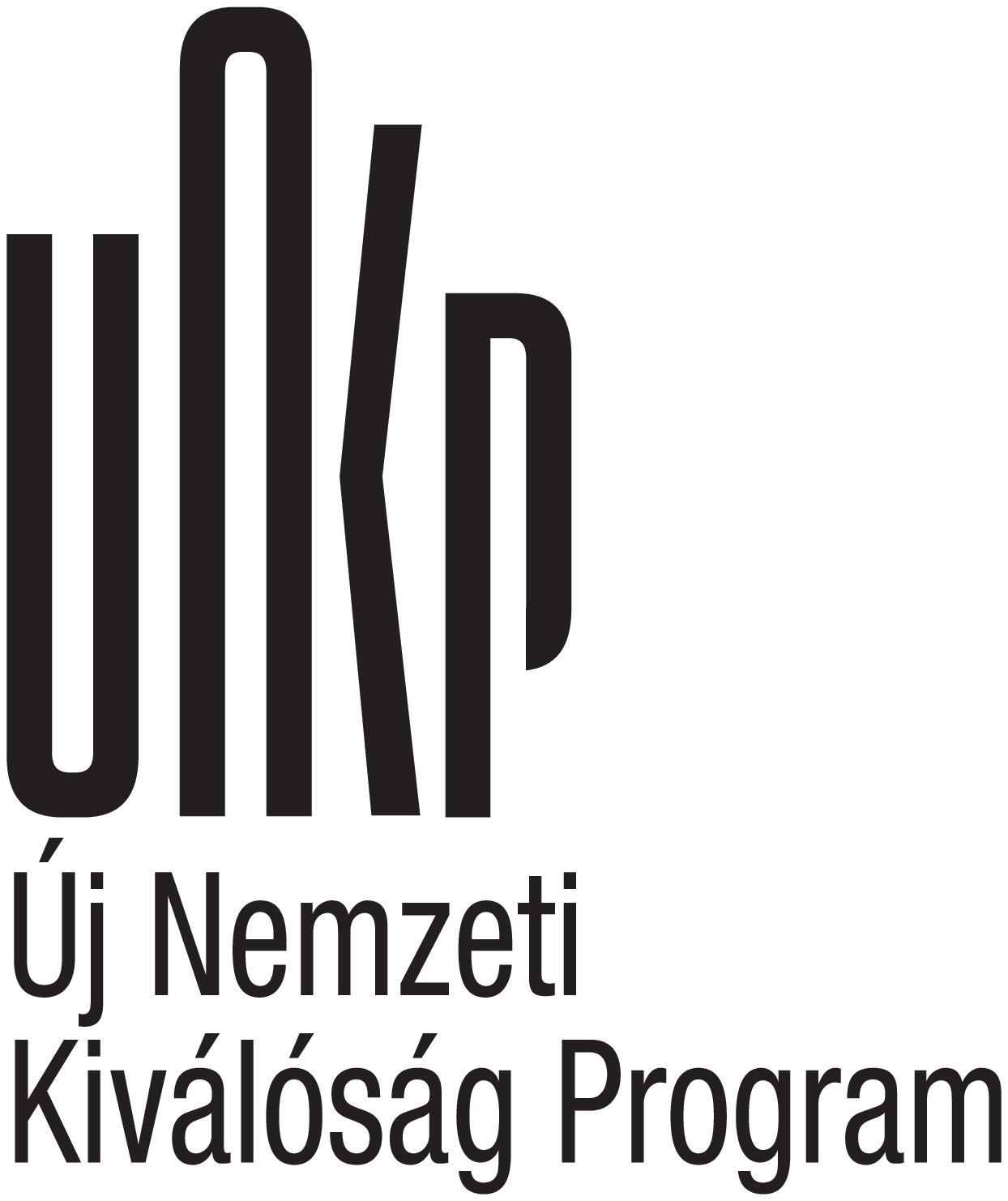}\includegraphics[height=0.8cm]{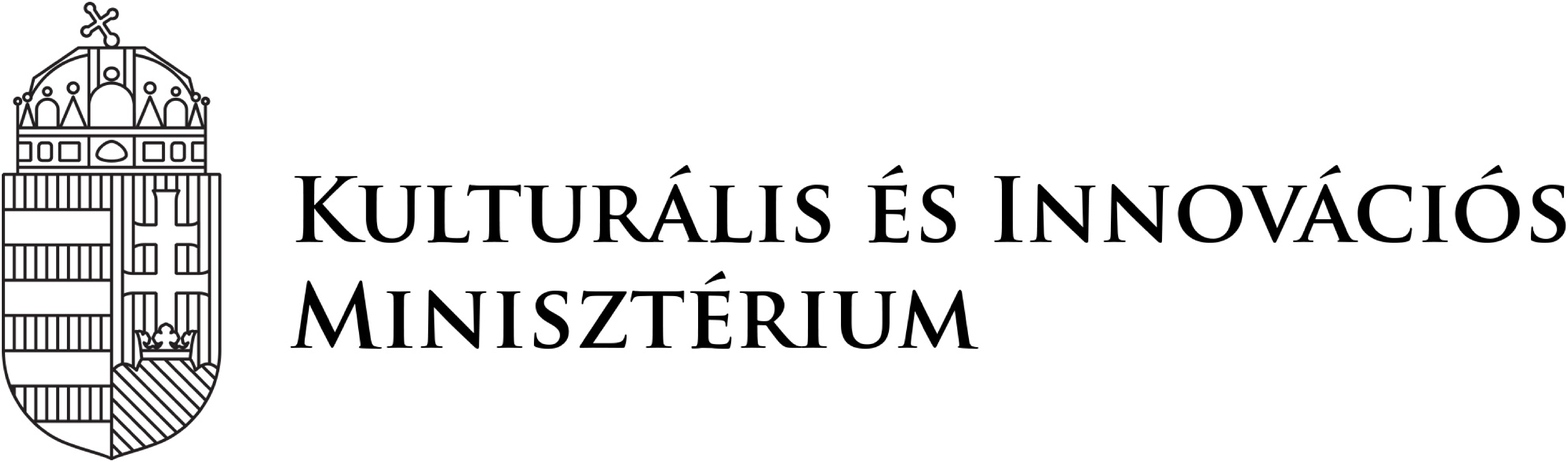} The author was supported by the \'UNKP-22-4-SZTE-480 New National Excellence Program of the Ministry for Culture and Innovation from the source of the National Research, Development and Innovation Fund. The research was supported by the Hungarian National Research, Development and Innovation Office, OTKA grant no. SNN 132625.} \\
\small Alfr\'ed R\'enyi Institute of Mathematics\\
\small MTA--ELTE Geometric and Algebraic Combinatorics Research Group \\
\small SZTE Bolyai Institute \\
\small \url{blazsik@renyi.hu} \\
}
\date{\today}
\begin{document}
\maketitle
\begin{abstract}
Let $G(V,E)$ be a finite, simple, isolate-free graph. Two disjoint sets $A,B\subset V$ form a total coalition in $G$, if none of them is a total dominating set, but their union $A\cup B$ is a total dominating set. A vertex partition $\Psi=\{C_1,C_2,\dots,C_k\}$ is a total coalition partition, if none of the partition classes is a total dominating set, meanwhile for every $i\in\{1,2,\dots,k\}$ there exists a distinct $j\in\{1,2,\dots,k\}$ such that $C_i$ and $C_j$ form a total coalition. The maximum cardinality of a total coalition partition of $G$ is  the total coalition number of $G$ and denoted by $TC(G)$. We give a general sharp upper bound on the total coalition number as a function of the maximum degree.
We further investigate this optimal case and study the total coalition graph.
We show that every graph can be realised as a total coalition graph.
\end{abstract}

\section{Introduction}
There are several problems in combinatorics, which can be formulated as a certain type of domination problem on an appropriate graph. The various domination concepts are well-studied now. However, new concepts are introduced frequently and the interest is growing rapidly. We recommend three fundamental books \cite{HHS1,HHS2,HY} and some surveys \cite{Henning,GH} about domination in general. In this paper, we investigate a new notion, the \emph{total coalition partition} of graphs that is introduced very recently by Alikhani, Bakhshesh and Golmohammadi \cite{TCG} motivated by the similarly defined \emph{coalition partitions}.

\bigskip

Let $G(V,E)$ denote a graph with vertex set $V$ and edge set $E$. The graphs in this paper are finite (i.e. $|V|$ is finite) and without loops or multiple edges.
In other words, the edges correspond to pairs of different vertices, and there can be at most one edge between two vertices. We use the words points and vertices interchangeably.
The \emph{neighborhood} $N(v)$ of a vertex $v$ is the set of vertices adjacent to $v$, i.e. $N(v)=\{u\ |\ uv\in E\}$. The vertices in $N(v)$ are the \emph{neighbors} of $v$. The \emph{degree} $d(v)$ of $v$ is defined as the number of neighbors of $v$.
The vertex $v$ is \emph{isolated} if $d(v)=0$, and \emph{full} if every other vertex is a neighbor of $v$, i.e. $d(v)=|V|-1$. A graph is \emph{isolate-free} if there are no isolated vertices in it. The minimum degree and the maximum degree of $G$ is denoted by $\delta(G)$ and $\Delta(G)$, respectively. A \emph{vertex partition} is a partition of $V$ into pairwise disjoint subsets.
A set of edges $F\subseteq E$ is \emph{independent} (or a \emph{matching}) if the endpoints of the edges are pairwise different. The \emph{maximum matching} of $G$ refers to a matching with the largest possible cardinality, $\nu(G)$. A \emph{vertex cover} is a set of vertices that includes at least one endpoint of every edge of the graph.

A set of vertices $S\subseteq V$ is a \emph{dominating set} if every vertex of $V \setminus S$ is adjacent to at least one vertex of $S$. Similarly, a set of vertices $T\subseteq V$ is a \emph{total dominating set} if every vertex of $V$ is adjacent to at least one vertex of $T$. Usually the interest centers around finding the (total) dominating set with the minimum cardinality, and this notion is called the \emph{(total) domination number}. Another type of problem with wide literature (e.g. \cite{CDH, CH, GH2}) is packing disjoint (total) dominating sets within the same graph. Here the \emph{(total) domatic number} is defined as the most number of (total) dominating sets, which can partition the vertex set of the graph.

In 2020, Haynes et al. \cite{HHHMM} introduced the \emph{coalitions} in graphs. Let $A\subset V$ and $B\subset V$ denote two (disjoint) subsets of $V$. They form a \emph{coalition} if none of them are dominating sets, but their union $A\cup B$ is.
A \emph{coalition partition} is a vertex partition $\Psi=\{C_1,C_2,\dots,C_k\}$ into $k$ non-empty partition classes such that for every $i\in\{1,2,\dots,k\}$ the class $C_i$ is either a dominating set and $|C_i|=1$, or there exists another class $C_j$ so that they form a coalition. The maximum cardinality of a coalition partition is called the \emph{coalition number} of the graph, and denoted by $C(G)$. The \emph{coalition graph}, denoted by $CG(G,\Psi)$, is created by associating the partition classes of a coalition partition $\Psi$ with the vertex set, and the edges correspond to those pair of classes, which form a coalition. In \cite{HHHMM2}, Haynes et al. proved some upper bounds on the coalition number of graphs in terms of $\delta(G)$, and $\Delta(G)$.

In \cite{TCG}, Alikhani, Bakhshesh and Golmohammadi introduced a new notion, the \emph{total coalition partition} of $G$, motivated by the coalition partitions. Similarly, let $A\subset V$ and $B\subset V$ denote two (disjoint) subsets of $V$. They form a \emph{total coalition} if none of them are total dominating sets, but their union $A\cup B$ is. A \emph{total coalition partition} is a vertex partition $\Psi=\{C_1,C_2,\dots,C_k\}$ into $k$ non-empty partition classes such that for every $i\in\{1,2,\dots,k\}$ there exists a distinct $j\in\{1,2,\dots,k\}$ such that $C_i$ and $C_j$ form a total coalition. The maximum cardinality of a total coalition partition is called the \emph{total coalition number} of the graph, and denoted by $TC(G)$. The \emph{total coalition graph}, denoted by $TCG(G,\Psi)$, is created by associating the partition classes of a total coalition partition $\Psi$ with the vertex set, and the edges correspond to those pair of classes, which form a total coalition. Let us call a total coalition graph \emph{optimal} if it corresponds to a total coalition partition with maximum cardinality. In \cite{TCG}, Alikhani, Bakhshesh and Golmohammadi established some bounds on the total coalition number, and studied graphs with small minimum degrees.

This paper is organized as follows. In Section \ref{s:basic}, we collect some basic properties of the total coalition graphs which we use later on, and show that any graph can be realised as a total coalition graph. Our main result is a general sharp upper bound on the total coalition number for arbitrary isolate-free graphs in Section \ref{s:main}. In Section \ref{s:minmax}, we prove another general tight upper bound in terms of both the minimum and maximum degree. In Section \ref{s:tcg}, we determine the total coalition graph if the corresponding partition is of maximum cardinality and the maximum degree is at least 5. Moreover, we show an example with smaller maximum degree such that there exist two optimal total coalition partitions for which the corresponding total coalition graphs are non-isomorphic. We conclude the paper by investigating the possible non-isomorphic optimal total coalition graphs if $\Delta(G)\le 4$.

\section{Total coalition graph and its properties}\label{s:basic}

One can observe a subtle difference between coalitions and total coalitions.
Namely, the graph has to be isolate-free in order to admit a total coalition.
Let $G$ denote an isolate-free graph on $n$ vertices with total coalition number $TC(G)=k$ and maximium degree $\Delta(G)=\Delta$. Let $\Psi=\{C_1,C_2,\dots,C_k\}$ denote a total coalition partition of $G$, and $TCG(G,\Psi)$ the corresponding total coalition graph.

\begin{lemma}[Theorem 2.10 in \cite{TCG}] \label{l:maxfok}
The maximum degree of $TCG(G,\Psi)$ cannot be greater than the maximum degree of $G$, i.e. $\Delta(TCG(G,\Psi))\le \Delta$.
\end{lemma}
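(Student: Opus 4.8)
The plan is to fix an arbitrary class $C_i$ of the partition $\Psi$ and bound its degree in $TCG(G,\Psi)$ directly, by exhibiting a single vertex of $G$ whose neighborhood simultaneously witnesses every total coalition involving $C_i$. Recall that the degree of $C_i$ in the total coalition graph counts the classes $C_j$ for which $C_i\cup C_j$ is a total dominating set while neither $C_i$ nor $C_j$ is. I would show that this count is at most $d(v)$ for a suitably chosen vertex $v$, and hence at most $\Delta$.

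First I would exploit the fact that $C_i$ is \emph{not} a total dominating set. By definition this means there is some vertex $v\in V$ with no neighbor in $C_i$, i.e.\ $N(v)\cap C_i=\emptyset$; fix one such $v$. Next, consider any class $C_j$ that forms a total coalition with $C_i$. Then $C_i\cup C_j$ is a total dominating set, so in particular $v$ must have a neighbor in $C_i\cup C_j$. Since $v$ has no neighbor in $C_i$, it follows that $N(v)\cap C_j\neq\emptyset$. Thus every coalition partner of $C_i$ contains at least one neighbor of $v$.

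The final step is a disjointness/counting argument. The classes forming total coalitions with $C_i$ are distinct members of the partition $\Psi$, hence pairwise disjoint. Selecting one neighbor of $v$ from each such class yields pairwise distinct vertices, all lying in $N(v)$. Consequently the number of coalition partners of $C_i$ is at most $|N(v)|=d(v)\le\Delta$. Since $C_i$ was arbitrary, this gives $\Delta(TCG(G,\Psi))\le\Delta$.

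I do not expect a serious obstacle here; the single point requiring care is the choice of the witness vertex $v$. It must be a vertex that $C_i$ fails to totally dominate — such a vertex exists precisely because $C_i$ is not a total dominating set — and crucially the \emph{same} $v$ must work for all coalition partners simultaneously, which it does since the obstruction $N(v)\cap C_i=\emptyset$ depends only on $C_i$. The disjointness of the partition classes is then exactly what forces the chosen neighbors to be distinct and collapses the bound to a single vertex degree in $G$.
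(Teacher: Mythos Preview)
Your proposal is correct and follows essentially the same argument as the paper: pick a vertex $v$ not dominated by $C_i$, observe that every coalition partner of $C_i$ must meet $N(v)$, and use disjointness of the classes to bound the number of partners by $|N(v)|\le\Delta$. The paper's proof is terser but identical in substance.
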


\begin{proof}
    For any partition class $C_i$ there exists a vertex $v_i$, which is not dominated by $C_i$. Thus every $C_j$ that forms a total coalition with $C_i$ must contain at least one vertex from the neighborhood of $v_i$ in $G$. Since $v_i$ has at most $\Delta$ neighbors, the degree of $C_i$ in $TCG(G,\Psi)$ is also at most $\Delta$. Hence $\Delta(TCG(G,\Psi))\le \Delta$ follows.
\end{proof}

One can establish a connection between the size of the maximum matching of $TCG(G,\Psi)$ and $\Delta$ in the following way. 

\begin{lemma} \label{l:matching}
    The size of the maximum matching is at most $\Delta$, i.e. $\nu(TCG(G,\Psi))\le\Delta$.
\end{lemma}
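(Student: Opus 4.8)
The plan is to prove the bound by a direct counting argument that parallels the proof of Lemma \ref{l:maxfok}, but applied \emph{simultaneously} to all edges of a matching rather than to the neighborhood of a single undominated vertex. Let $M=\{e_1,\dots,e_m\}$ be a maximum matching of $TCG(G,\Psi)$, so that $m=\nu(TCG(G,\Psi))$, and write each edge as $e_t=\{A_t,B_t\}$, where $A_t,B_t\in\Psi$ are the two partition classes forming the corresponding total coalition. The first observation is that, because $M$ is a matching, the $2m$ classes $A_1,B_1,\dots,A_m,B_m$ are pairwise distinct; and since distinct classes of a vertex partition are disjoint subsets of $V$, the $m$ unions $A_t\cup B_t$ are pairwise disjoint as well.

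Next I would fix an arbitrary vertex $v\in V$ and track how it is total-dominated by each coalition. By the definition of a total coalition, each union $A_t\cup B_t$ is a total dominating set of $G$; hence $v$ is adjacent to at least one vertex of it, i.e. $N(v)\cap(A_t\cup B_t)\neq\emptyset$, for every $t\in\{1,\dots,m\}$. Choosing one such neighbor $w_t$ from each of these sets and invoking their pairwise disjointness shows that $w_1,\dots,w_m$ are $m$ distinct elements of $N(v)$. Therefore $m\le d(v)\le\Delta$, which is exactly the claimed bound $\nu(TCG(G,\Psi))\le\Delta$.

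I do not expect a genuine obstacle here: the argument is a clean pigeonhole in which the disjoint coalitions all share the common \emph{target} vertex $v$. The only point that requires a little care is that total domination forces \emph{every} vertex of $V$ — including those lying inside the classes $A_t$ and $B_t$ themselves — to have a neighbor in each total dominating set, so the choice of $v$ is completely unconstrained and we never need to assume $v\notin A_t\cup B_t$. This is precisely the feature that distinguishes the total-coalition setting from the ordinary coalition setting and is what makes the resulting matching bound depend only on $\Delta$.
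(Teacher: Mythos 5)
Your proof is correct, but it is not the argument the paper uses for this lemma. The paper's proof is a global counting argument: it invokes the well-known fact that every total dominating set of an isolate-free graph has cardinality at least $\frac{n}{\Delta}$, notes that the coalition unions corresponding to matching edges are pairwise disjoint total dominating sets, and concludes that at most $\Delta$ of them fit inside the $n$ vertices. Your proof is instead a local pigeonhole at a single vertex $v$: each of the $m$ pairwise disjoint coalition unions must contain a neighbor of $v$, so $m\le d(v)\le\Delta$. Both are sound, and the disjointness observation you need is the same in both cases. The interesting difference is what each buys downstream. The paper's volume argument yields Corollary \ref{c:equality} essentially for free: equality $\nu=\Delta$ forces every coalition union to have size exactly $\frac{n}{\Delta}$, so the matched classes exhaust all of $V$ and the partition has exactly $2\Delta$ classes. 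Your argument, on the other hand, proves something strictly stronger than the statement: since $v$ was arbitrary, you may take $v$ of minimum degree and obtain $\nu(TCG(G,\Psi))\le\delta(G)$ --- which is precisely Lemma \ref{l:imp}, the ``improved version'' the paper proves separately in Section \ref{s:minmax} by exactly your argument. So you have, in effect, skipped ahead and anticipated the paper's own refinement; the only thing to be aware of is that if one adopts your proof here, the equality analysis in Corollary \ref{c:equality} needs a short additional argument (e.g., $\nu=\Delta$ forces $G$ to be $\Delta$-regular with each vertex having exactly one neighbor in each coalition union, whence the unions cover $V$).
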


\begin{proof}
    It is well-known that any total dominating set has cardinality  at least $\frac{n}{\Delta}$. Therefore if we consider an edge $C_iC_j$ of $TCG(G,\Psi)$, then $|C_i|+|C_j|=|C_i \cup C_j|\ge\frac{n}{\Delta}$ holds. Since $\displaystyle \left|\bigcup_{i=1}^k C_i \right|=\sum_{i=1}^k |C_i|=n$, the size of the maximum matching has to be at most $\Delta$.
\end{proof}

There is an immediate corollary regarding the case of equality.

\begin{corollary} \label{c:equality}
If $\nu(TCG(G,\Psi))=\Delta$, then for every edge $f$ of the maximum matching, the size of the union of the partition classes corresponding to the endpoints of $f$ is exactly $\frac{n}{\Delta}$. Thus, there can be no other vertices in the total coalition graph, since these partition classes consume all the vertices. Hence the number of partition classes in this case is exactly $2\Delta$.
\end{corollary}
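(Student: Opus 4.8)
The plan is to combine the per-edge lower bound used inside the proof of Lemma~\ref{l:matching} with a global vertex-counting argument, and then squeeze both inequalities to equality. First I would fix a maximum matching $M=\{f_1,\dots,f_\Delta\}$ of $TCG(G,\Psi)$, where $f_\ell$ joins the classes $C_{a_\ell}$ and $C_{b_\ell}$. Since each $f_\ell$ is an edge of the total coalition graph, the union $C_{a_\ell}\cup C_{b_\ell}$ is a total dominating set, so the bound $|C_{a_\ell}|+|C_{b_\ell}|\ge \frac{n}{\Delta}$ established within the proof of Lemma~\ref{l:matching} applies to every edge of the matching.

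Next I would sum these $\Delta$ inequalities. Because $M$ is a matching on $TCG(G,\Psi)$, the $2\Delta$ endpoint classes are pairwise distinct, and being distinct classes of a vertex partition they are pairwise disjoint. Hence the left-hand sum counts each involved vertex of $G$ exactly once, giving
\[
n \;=\; \Delta\cdot\frac{n}{\Delta} \;\le\; \sum_{\ell=1}^{\Delta}\bigl(|C_{a_\ell}|+|C_{b_\ell}|\bigr)\;=\;\Bigl|\bigcup_{\ell=1}^{\Delta}\bigl(C_{a_\ell}\cup C_{b_\ell}\bigr)\Bigr|\;\le\;\sum_{i=1}^{k}|C_i|\;=\;n,
\]
where the last inequality holds since the matched classes form a subfamily of all $k$ partition classes.

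Finally I would read off the equality cases. The chain above forces equality throughout. Equality in the first inequality, term by term, means $|C_{a_\ell}|+|C_{b_\ell}|=\frac{n}{\Delta}$ for every edge $f_\ell$ of the matching, which is the first assertion. Equality in the last inequality means the matched classes already account for all $n$ vertices, so no partition class lies outside the matching; thus $TCG(G,\Psi)$ has no vertices beyond the $2\Delta$ matched ones, giving $k=2\Delta$.

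I do not expect a genuine obstacle here: the argument is a clean squeezing of two inequalities whose combination pins down the sizes exactly. The only point needing care is the justification that the $2\Delta$ matched classes are pairwise disjoint, so that summing cardinalities does not over-count, and that they form a proper subfamily unless $k=2\Delta$; both follow immediately from $\Psi$ being a vertex partition and $M$ being a matching.
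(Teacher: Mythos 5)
Your proposal is correct and follows essentially the same route as the paper: the corollary is exactly the equality analysis of Lemma~\ref{l:matching}, squeezing the per-edge lower bound $|C_i|+|C_j|\ge \frac{n}{\Delta}$ for the $\Delta$ disjoint matched pairs against the total of $n$ vertices, which forces each union to equal $\frac{n}{\Delta}$ and leaves no room for unmatched (non-empty) classes, so $k=2\Delta$. Your explicit summation chain and the remark that non-emptiness of partition classes is what rules out classes outside the matching merely spell out what the paper treats as immediate.
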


The next observation helps to understand the maximum degree vertices of the total coalition graph if $\Delta(TCG(G,\Psi))=\Delta$.

\begin{lemma}\label{l:cover}
    If $\Delta(TCG(G,\Psi))=\Delta$, then the neighbors of any vertex with maximum degree in $TCG(G,\Psi)$ form a vertex cover. 
\end{lemma}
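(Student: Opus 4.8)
The plan is to reuse the witness vertex that already appeared in the proof of Lemma \ref{l:maxfok} and to pin down exactly where its neighbors can lie. Fix a class $C_i$ that attains the maximum degree, so $\deg_{TCG(G,\Psi)}(C_i)=\Delta$, and let $v_i\in V$ be a vertex of $G$ that is not dominated by $C_i$; such a $v_i$ exists because $C_i$ is not a total dominating set. Exactly as in Lemma \ref{l:maxfok}, every class that forms a total coalition with $C_i$ must contain a neighbor of $v_i$ in $G$, since $v_i$ has no neighbor in $C_i$ and yet must be dominated by the union. Write $N_{TCG}(C_i)=\{C_{j_1},\dots,C_{j_\Delta}\}$ for the $\Delta$ neighbors of $C_i$ in the total coalition graph. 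The goal is to show that this set of $\Delta$ classes covers every edge of $TCG(G,\Psi)$.

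The heart of the argument, and the step I expect to be the only delicate point, is a \emph{confinement} claim: $v_i$ has exactly $\Delta$ neighbors in $G$, and all of them lie in $\bigcup_{t=1}^{\Delta} C_{j_t}$. Indeed, the classes $C_{j_1},\dots,C_{j_\Delta}$ are pairwise disjoint and each contains at least one neighbor of $v_i$, yielding $\Delta$ distinct neighbors; since $d(v_i)\le\Delta$, these account for all neighbors of $v_i$, with exactly one in each class and none outside them. Thus $N(v_i)\subseteq\bigcup_{t=1}^{\Delta} C_{j_t}$. It is precisely the maximum-degree hypothesis that is doing the work here: it leaves no room for a neighbor of $v_i$ to sit in any class other than the neighbor classes of $C_i$.

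With the confinement in hand, the vertex-cover property follows at once. Take an arbitrary edge $C_aC_b$ of $TCG(G,\Psi)$. If one endpoint is $C_i$ itself, then the other is by definition a neighbor of $C_i$, so the edge is covered. Otherwise $C_a,C_b\neq C_i$; since $C_a$ and $C_b$ form a total coalition, $C_a\cup C_b$ is a total dominating set, so $v_i$ has a neighbor in $C_a\cup C_b$, say in $C_a$. By the confinement step this neighbor lies in $\bigcup_{t} C_{j_t}$, and because the classes are pairwise disjoint, $C_a$ must coincide with one of the $C_{j_t}$, i.e. $C_a\in N_{TCG}(C_i)$. Hence every edge of the total coalition graph has an endpoint among the neighbors of $C_i$, which is exactly the claim that these neighbors form a vertex cover. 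No computation beyond the counting in the confinement step is needed, so once that step is secured I anticipate no further obstacle.
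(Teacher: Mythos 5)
Your proof is correct and follows essentially the same route as the paper: the same witness vertex $v_i$, the same counting argument showing $d(v_i)=\Delta$ with all neighbors confined to the classes adjacent to $C_i$, and the same final observation that any total coalition must dominate $v_i$ and hence meet one of those classes. Your write-up merely makes explicit the steps the paper states tersely.
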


\begin{proof}
    Suppose $C_i$ is a vertex of $TCG(G,\Psi)$ with degree $\Delta$. Since $C_i$ is not a total dominating set in $G$, there exists a vertex $v_i$ in $G$, which is not dominated by $C_i$. This $v_i$ must be dominated by those partition classes which are adjacent to $C_i$ in the total coalition graph. There are $\Delta$ such partition classes, hence $v_i$ also has degree $\Delta$ in $G$. All its neighbors must correspond to those $\Delta$ partition classes, which are the neighbors of $C_i$ in the total coalition graph.

    Observe that $v_i$ can be dominated only by these $\Delta$ partition classes, hence the corresponding vertices in the total coalition graph form a vertex cover.
\end{proof}

Haynes et al. \cite{HHHMM3} proved that any graph $G$ can be realised as a coalition graph.
Based on their construction, we show the analogous statement for total coalition graphs.

\begin{lemma}
    For any isolate-free graph $G$, there exists another graph $H$ with a total coalition partition $\pi$ of $H$ such that $G$ is isomorphic to $TCG(H,\pi)$.
\end{lemma}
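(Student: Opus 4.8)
The plan is to adapt the coalition-graph realization of Haynes et al.\ to the total setting by building a bipartite host graph in which total domination is completely governed by a simple bookkeeping device. Write $V(G)=\{g_1,\dots,g_k\}$; note first that any total coalition graph is isolate-free (every class lies in a coalition, so has degree at least $1$ in $TCG$), which is precisely why the hypothesis on $G$ is needed. The host $H$ will carry the partition $\pi=\{C_1,\dots,C_k\}$ with $C_i$ playing the role of $g_i$. For each vertex $w$ of $H$ let $D(w)=\{m\in[k]\ :\ N(w)\cap C_m\neq\emptyset\}$ be the set of classes that dominate $w$. Since a union $\bigcup_{m\in S}C_m$ is a total dominating set exactly when $D(w)\cap S\neq\emptyset$ for every vertex $w$, the whole problem reduces to prescribing the family $\{D(w)\}_w$, and I would design $H$ so that this family realises the adjacency of $G$.

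Concretely, I would take one \emph{hub} $h_i$ per index and set $h_i\in C_i$. To force each class to fail on its own I add a \emph{single-failure} vertex $s_i$ adjacent to every hub except $h_i$, placed in $C_i$; to block the non-adjacent pairs I add, for each non-edge $g_ig_j$ of $G$, a \emph{blocker} $w_{ij}$ adjacent to every hub except $h_i$ and $h_j$, placed in $C_{\min(i,j)}$. I keep all feature vertices ($s_i$ and $w_{ij}$) pairwise non-adjacent, so the only edges of $H$ run between hubs and features. Reading off the neighbourhoods gives $D(s_i)=[k]\setminus\{i\}$ and $D(w_{ij})=[k]\setminus\{i,j\}$ immediately, and a short check shows $D(h_m)=[k]\setminus\{m\}$ as well, since the vertices $s_i$ with $i\neq m$ already dominate $h_m$ from every class other than $C_m$.

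Verification then becomes a finite check through the complementary sets $Z(w)=[k]\setminus D(w)$: the only sets that occur are the singletons $\{m\}$ (from $h_m$ and $s_m$) and the pairs $\{i,j\}$ coming from non-edges of $G$. From this I would conclude (i) no $C_i$ is a total dominating set, witnessed by $s_i$; (ii) for a non-edge $g_ig_j$ the blocker $w_{ij}$ is undominated by $C_i\cup C_j$, so the union is not a total dominating set; and (iii) for an edge $g_ig_j$ no $Z(w)$ contains $\{i,j\}$, because the only pairs appearing are the non-edges, so $C_i\cup C_j$ is a total dominating set. Combining (i)--(iii), the classes $C_i$ and $C_j$ form a total coalition if and only if $g_ig_j\in E(G)$, so $C_i\mapsto g_i$ is an isomorphism $TCG(H,\pi)\to G$; moreover, since $G$ is isolate-free every class lies in some coalition, so $\pi$ is a genuine total coalition partition.

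The main obstacle is exactly the point at which total coalitions differ from ordinary coalitions: every auxiliary vertex must itself be assigned to a partition class, and once placed it can inadvertently dominate other vertices and thereby enlarge some $D(w)$, creating spurious coalitions. The device that makes the argument go through is to keep the features independent and adjacent only to hubs, so that inserting $w_{ij}$ or $s_i$ into a class never adds an unwanted index to any $D(\cdot)$; I would spend the care here. A secondary, easily dispatched issue is isolate-freeness of $H$: a blocker has a neighbour only if some third index exists, but a non-edge can occur in an isolate-free graph only when $k\geq 3$, so the construction is automatically isolate-free and every $w_{ij}$ is nontrivially dominated.
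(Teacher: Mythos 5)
Your proposal is correct and takes essentially the same route as the paper: both adapt the Haynes et al.\ coalition-graph construction, using one hub vertex per class together with auxiliary vertices adjacent to all hubs except a prescribed set, where the non-edges of $G$ are encoded by ``all-but-two'' blockers and single-class failure is witnessed by ``all-but-one'' vertices. The differences are implementation details only --- you drop the clique on the hubs (making the host bipartite), use one failure vertex $s_i$ per class instead of the paper's two vertices per edge, and place the blocker $w_{ij}$ inside $C_{\min(i,j)}$ rather than in a third class --- and your verification via the sets $D(w)$, $Z(w)$ is a clean reformulation of the same case check.
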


\begin{proof}
Let the vertices of $G$ be $\{v_1,\dots,v_n\}$ and the edges be $\{e_1,\dots,e_m\}$.
We define three type of vertices of $H$.
One corresponds to the vertices of $G$, one to the edges of $G$ and one to the non-edges of $G$.
With a slight abuse of notation, let the first type of vertices be $\{v_1,\dots,v_n\}$. 
They span a complete graph on $n$ vertices in $H$.
In the partition $\pi$, we put $v_i$ into class $V_i$ for every $1\le i\le n$.
Secondly, for every edge $e_i=v_jv_k$ of $G$, we define two vertices $u_{ij}$ and $u_{ik}$ of $H$ such that
$u_{ij}$ is adjacent to $\{v_1,\dots,v_n\}$ except $v_j$, vertex $u_{ik}$ is adjacent to $\{v_1,\dots,v_n\}$ except $v_k$.
We put $u_{ij}$ to $V_k$ and $u_{ik}$ to $V_j$.
Thirdly, for every non-edge $v_jv_k$ of $G$, we define a vertex $x_{jk}$ of $H$ such that $x_{jk}$ is adjacent to $\{v_1,\dots,v_n\}$ except $v_j$ and $v_k$.
We put $x_{jk}$ to any $V_i$, where $i\neq j,k$.

First notice that any partition class $V_i$ is not a total dominating set.
Indeed, if $v_iv_j$ is a non-edge of $G$, then $V_i$ does not dominate $x_{ij}$.
If $e_k=v_iv_j$ is an edge of $G$, then $u_{ki}$ is not adjacent to $v_i$ and neither is any other vertex in $V_i$, since they are from the second or third class.

Second, we have to show that $V_j\cup V_k$ is a total dominating set if and only if $v_jv_k$ is an edge of $G$.
Assume $v_jv_k$ is an edge of $G$.
Since $v_j\in V_j$ and $v_k\in V_k$ the set $V_j\cup V_k$ dominates every vertex from the first class and the third class.
Although $v_j$ is not adjacent to vertices of form $u_{ij}$, the vertex $v_k$ dominates them. Vertex $v_j$ dominates all other vertices in the second class.

Assume now that $v_jv_k$ is a non-edge of $G$.
By definition $x_{jk}$ is not adjacent to either $v_j$ or $v_k$.
Therefore $V_j\cup V_k$ is not a total dominating set.
\end{proof}

\section{General upper bound on $TC(G)$} \label{s:main}

Alikhani et al. proved some upper bounds on $TC(G)$ if $\delta(G)$ is precisely 1 or 2.

\begin{theorem}[Theorem 3.5. in \cite{TCG}] \label{delta1}
For any graph $G$ with $\delta(G)=1$, $TC(G)\le \Delta(G)+1$.
\end{theorem}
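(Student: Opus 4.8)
The plan is to exploit the single degree-one vertex to pin down the structure of every total dominating set, and thereby of the whole total coalition partition. First I would fix a vertex $v$ with $d(v)=1$ and let $u$ be its unique neighbor, so $N(v)=\{u\}$. Since a set $T$ totally dominates $v$ only if $N(v)\cap T\neq\emptyset$, the key observation is that \emph{every} total dominating set of $G$ must contain $u$.

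Next I would take an arbitrary total coalition partition $\Psi=\{C_1,\dots,C_k\}$ with $k=TC(G)$, and let $C_1$ be the class containing $u$; note that $u$ lies in exactly one class because $\Psi$ is a partition. If $C_i\cup C_j$ is a total dominating set, then it contains $u$, so $i=1$ or $j=1$. In other words, every total coalition must involve $C_1$. By the definition of a total coalition partition, each class $C_i$ with $i\neq 1$ forms a total coalition with some class; that partner can only be $C_1$, so $C_1\cup C_i$ is a total dominating set for every $i\neq 1$.

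Translating this into the total coalition graph $TCG(G,\Psi)$, the vertex $C_1$ is adjacent to all of the remaining $k-1$ vertices, so the degree of $C_1$ in $TCG(G,\Psi)$ equals $k-1$. By Lemma \ref{l:maxfok}, the maximum degree of $TCG(G,\Psi)$ is at most $\Delta$, hence $k-1\le \Delta$, which yields $TC(G)=k\le \Delta+1$, as claimed.

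The whole argument hinges on the observation that a degree-one vertex forces its neighbor into every total dominating set; once that is in hand, the bound follows immediately from the already established cap on the maximum degree of the total coalition graph. I therefore expect no genuine obstacle beyond isolating this observation, and the only point requiring a word of care is that $u$ belongs to exactly one partition class, which is automatic.
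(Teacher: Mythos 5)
Your proof is correct. Note that the paper itself gives no proof of this statement (it is quoted as Theorem 3.5 of \cite{TCG}), so there is nothing in-paper to compare against; your argument --- every total dominating set must contain the unique neighbor $u$ of a degree-one vertex, hence the class $C_1$ containing $u$ is the partner in every total coalition, and then $\deg(C_1)=k-1\le\Delta(G)$ by Lemma \ref{l:maxfok} --- is the natural route and matches the standard proof of this result. All steps check out, including the observation that each $C_i$ with $i\neq 1$ must coalesce with $C_1$ and therefore $C_1$ is adjacent to all other classes in $TCG(G,\Psi)$.
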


\begin{theorem}[Theorem 4.2. in \cite{TCG}]\label{delta2}
For any graph $G$ with $\delta(G)=2$, $TC(G)\le 2\Delta(G)$.
\end{theorem}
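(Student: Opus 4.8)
The plan is to fix a total coalition partition $\Psi=\{C_1,\dots,C_k\}$ with $k=TC(G)$ and exploit a vertex of minimum degree to pin down the structure of $TCG(G,\Psi)$. Since $\delta(G)=2$, I would choose a vertex $v$ with $N(v)=\{a,b\}$ and let $C_a,C_b$ denote the classes containing $a$ and $b$ (possibly $C_a=C_b$). The first step is the key structural observation: every edge of $TCG(G,\Psi)$ is incident to $C_a$ or $C_b$. Indeed, if $C_iC_j$ is an edge, then $C_i\cup C_j$ is a total dominating set, so it must dominate $v$, i.e.\ contain a neighbour of $v$; as $N(v)=\{a,b\}$, one of $C_i,C_j$ equals $C_a$ or $C_b$. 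Hence $\{C_a,C_b\}$ is a vertex cover of $TCG(G,\Psi)$ of size at most two, and I will bound $k$ by bounding how many vertices such a graph can carry. Throughout I use that every class has a coalition partner, so $TCG(G,\Psi)$ has minimum degree at least $1$, and that its maximum degree is at most $\Delta$ by Lemma~\ref{l:maxfok}.

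With the vertex cover in hand I would split into cases according to $C_a,C_b$. If $C_a=C_b$, the cover has size one, so every other class is adjacent to $C_a$ and $TCG(G,\Psi)$ is a star; then $k-1=\deg(C_a)\le\Delta$, giving $k\le\Delta+1\le 2\Delta$. If $C_a\neq C_b$ and $C_aC_b$ is an edge, write $S=V(TCG(G,\Psi))\setminus\{C_a,C_b\}$; each vertex of $S$ is a neighbour of $C_a$ or $C_b$, but the edge $C_aC_b$ accounts for one neighbour of each endpoint, so $|S|\le(\deg(C_a)-1)+(\deg(C_b)-1)\le 2\Delta-2$ and thus $k=|S|+2\le 2\Delta$ (all degrees taken in $TCG(G,\Psi)$).

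The remaining case, $C_a\neq C_b$ non-adjacent, is the main obstacle, since a naive count of $S=N(C_a)\cup N(C_b)$ only yields $k\le 2\Delta+2$; I must save two classes using total domination beyond the single vertex $v$. If both $C_a$ and $C_b$ have degree at most $\Delta-1$, then $|S|\le(\Delta-1)+(\Delta-1)=2\Delta-2$ and we are done. Otherwise some class, say $C_a$, has degree exactly $\Delta$; then $\Delta(TCG(G,\Psi))=\Delta$ by Lemma~\ref{l:maxfok}, so Lemma~\ref{l:cover} applies and $N(C_a)$ is itself a vertex cover of $TCG(G,\Psi)$. Since $C_b\notin N(C_a)$ (the two are non-adjacent), every edge at $C_b$ must reach into $N(C_a)$, forcing $N(C_b)\subseteq N(C_a)$ and hence $S=N(C_a)$, so $|S|\le\Delta$ and $k\le\Delta+2\le 2\Delta$, where the last inequality uses $\Delta\ge\delta=2$. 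Combining all cases yields $TC(G)\le 2\Delta(G)$. The only delicate point is this last subcase, where the maximum-degree cover lemma is exactly what rules out the extra classes that pure counting would otherwise permit.
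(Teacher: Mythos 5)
Your proof is correct, but it takes a genuinely different route from the paper's. The paper does not prove this statement directly (it is quoted from \cite{TCG}); it recovers it from its general machinery: Lemma~\ref{l:imp} uses a minimum-degree vertex to bound the matching number of the total coalition graph, $\nu(TCG(G,\Psi))\le\delta(G)=2$, and the counting argument in the proof of Theorem~\ref{t:quadratic} then gives $k\le m(\Delta(G)-m+2)$, which under the constraint $m\le 2$ is maximized at $m=2$ (the quadratic increases up to $\frac{\Delta(G)+2}{2}\ge 2$), yielding $TC(G)\le 2\Delta(G)$; this is exactly the specialization recorded in Theorem~\ref{t:minmax}. You instead observe that the two classes $C_a,C_b$ containing the neighbours of a degree-$2$ vertex form a vertex cover of $TCG(G,\Psi)$ of size at most two (every coalition union must dominate $v$, hence contain $a$ or $b$), and you finish by a case analysis: a star when $C_a=C_b$, a degree count saving two via the edge $C_aC_b$ when they are adjacent, and---in the non-adjacent case---either both degrees are below $\Delta(G)$ or Lemma~\ref{l:cover} forces $N(C_b)\subseteq N(C_a)$, giving $k\le\Delta(G)+2\le 2\Delta(G)$. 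Both arguments hinge on the same underlying fact, namely that every coalition union contains a neighbour of a minimum-degree vertex, but your cover-based count is self-contained and yields explicit structural information about the extremal total coalition graphs, whereas the paper's matching-based count generalizes uniformly to arbitrary $\delta(G)$. Your invocation of Lemma~\ref{l:cover} in the last subcase is the one place where you need a nontrivial tool that the paper's route avoids, since the quadratic bound absorbs that case automatically; all of your case verifications check out, including the uses of $\Delta(G)\ge\delta(G)=2$ in the inequalities $\Delta(G)+1\le 2\Delta(G)$ and $\Delta(G)+2\le 2\Delta(G)$.
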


However, they did not provide a general upper bound on $TC(G)$. One might wonder how does the optimal structure of a total coalition partition looks like. Either the size of the partition classes are balanced or there is one large class, that forms a total dominating set with any other class or something in between.
For instance, a few fairly large classes, that form total coalitions with multiple other classes. These options can be phrased in the language of the total coalition graph, as well.
The second option means that in the total coalition graph there is a full vertex. Moreover, there are examples where determining the optimal structure is not possible because the optimum can be reached by different structures, see Figure \ref{fig:opt}.

\begin{figure}[!h]
    \centering
    \includegraphics[width=0.7\textwidth]{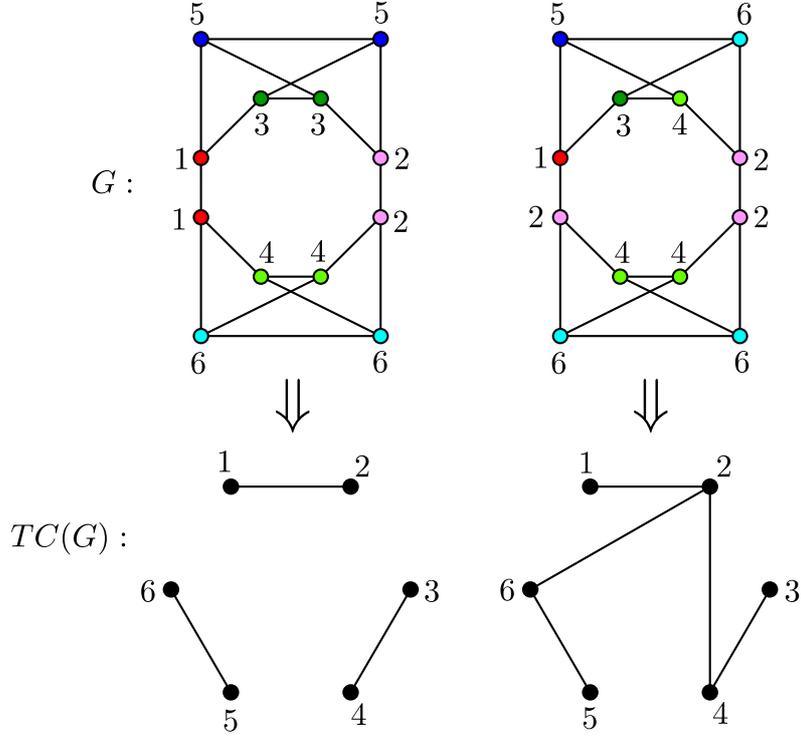}
    \caption{Different partitions reach the optimum for the same graph with non-isomorphic total coalition graphs.}
    \label{fig:opt}
\end{figure}

By Lemma \ref{l:maxfok}, if there is a full vertex in the total coalition graph, then the total coalition graph has at most $\Delta(G)+1$ vertices.
On the other hand, if the total coalition partition is balanced, in other words the classes have almost the same size, then the maximum number of partition classes are bounded from above by $2\Delta(G)$. By Lemma \ref{l:matching}, the union of two partition classes, which form a total coalition, must have size at least $\frac{|V(G)|}{\Delta(G)}$, hence the classes must have size at least $\frac{|V(G)|}{2\Delta(G)}$.

These observations suggest that the upper bounds from \cite{TCG} are reasonable. On the flip side, one might think that combining the two approaches, namely using multiple classes with relatively high degree within the total coalition graph, can lead to even more partition classes. As the next theorem shows, there are graphs for which the total coalition number is quadratic in terms of $\Delta(G)$.

\begin{theorem}\label{t:pelda}
   For any $\Delta\ge 3$, there exists a graph $G$ such that $\Delta(G)=\Delta$ and $$TC(G)\ge \left\{\begin{array}{cc}
        \frac{\Delta^2}{4}+\Delta + \frac{3}{4} & \mathrm{if~} \Delta \mathrm{~is~odd,} \\
        \frac{\Delta^2}{4}+\Delta + 1& \mathrm{if~} \Delta \mathrm{~is~even.}
    \end{array} \right.$$
\end{theorem}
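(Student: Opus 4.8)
The plan is to realise the extremal configuration suggested by Lemmas~\ref{l:maxfok}, \ref{l:matching} and~\ref{l:cover} together with Corollary~\ref{c:equality}: a few high-degree ``hub'' classes that pairwise form total coalitions, each decorated with many small ``leaf'' classes. Concretely, set $h=\lfloor\Delta/2\rfloor+1$ and $\ell=\lceil\Delta/2\rceil$, so that $(h-1)+\ell=\Delta$ and $h(\ell+1)$ equals the displayed quantity in both parities. I would construct an isolate-free graph $G$ with $\Delta(G)=\Delta$ together with a total coalition partition consisting of $h$ hub classes $H_1,\dots,H_h$ and, for each $i$, of $\ell$ singleton leaf classes $L_{i,1},\dots,L_{i,\ell}$. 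The intended total coalition graph is then the clique $K_h$ on the hubs with $\ell$ pendants hung on every hub; one checks that this graph has maximum degree exactly $\Delta$ and that the neighbourhood of each hub is a vertex cover, consistent with Lemma~\ref{l:cover}. Since for the lower bound I only need every class to possess a partner, extra total coalitions would be harmless and $\Delta(TCG)\le\Delta$ holds automatically by Lemma~\ref{l:maxfok}.

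The key design is to make every hub an \emph{almost} total dominating set. For each $i$ I reserve a private vertex $a_i$, place it in $H_i$, and arrange that $a_i$ is the unique vertex of $G$ not dominated by $H_i$, while $a_i$ is dominated by every other hub and by every leaf of $H_i$. This is forced by prescribing $N_G(a_i)$ to consist of exactly one vertex in each hub $H_j$ with $j\neq i$ together with the $\ell$ leaf-vertices of $H_i$, so that $\deg_G(a_i)=(h-1)+\ell=\Delta$; likewise each leaf-vertex $x_{i,m}$ is joined to $a_i$ and to one vertex of each remaining hub, giving it degree $h\le\Delta$. With this in place the verification is routine: no $H_i$ totally dominates (it misses $a_i$) and no leaf does (a singleton cannot); $H_i\cup H_j$ totally dominates because $H_j$ covers $a_i$ and $H_i$ covers $a_j$ while each hub covers everything else; and $H_i\cup L_{i,m}$ totally dominates because $x_{i,m}$ covers precisely the single missing vertex $a_i$. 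Thus every class has a partner and the partition is valid, whence $TC(G)\ge h(\ell+1)$.

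The real work, and the step I expect to be the main obstacle, is supplying the ``bulk'' of the hubs so that each $H_i$ genuinely dominates all of $V(G)$ except its own $a_i$, while keeping every degree at most $\Delta$. Writing $H_i=\{a_i\}\cup B_i$, every bulk vertex of every hub, as well as every $a_j$ and every leaf, must receive a dominating edge from each hub, so the bipartite incidence between $B_i$ and the vertices it must dominate is extremely tight: a short double count shows $|B_i|$ must be about $\tfrac{(h-1)(\ell+1)}{\Delta-h}\approx \Delta/2$, so that the degree budget $|B_i|\Delta$ only barely exceeds the demand, and a naive dense choice (e.g.\ making the bulk complete multipartite) immediately overshoots $\Delta$. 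I therefore need an explicit near-regular incidence, for instance a symmetric round-robin or resolvable-design pattern on the bulk vertices, realising all $h$ near-perfect dominations at once, together with a Hall-type check that it respects the degree cap. Once this incidence is produced, all degrees are at most $\Delta$ with equality at the $a_i$, giving $\Delta(G)=\Delta$ and the stated bound; the small cases (where $\Delta-h$ is tiny) would be treated by hand, noting that for $\Delta=3$ the target $h(\ell+1)=6=2\Delta$ already coincides with the matching-saturated situation of Corollary~\ref{c:equality}.
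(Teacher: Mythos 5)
Your high-level design is exactly the one the paper uses: hub classes that are each ``almost'' total dominating sets missing a single private vertex $a_i$, with $\ell=\lceil\Delta/2\rceil$ singleton leaf classes attached to each of the $h=\lfloor\Delta/2\rfloor+1$ hubs; your arithmetic (that $h(\ell+1)$ matches the stated bound in both parities) and your verification of the coalition conditions are correct \emph{conditional on the bulk existing}. But the proof is not complete: the step you yourself flag as ``the real work'' --- constructing the bulk sets $B_i$ so that every hub dominates every vertex except its own $a_i$ while no degree exceeds $\Delta$ --- is precisely the content of the theorem, and you leave it at the level of ``a symmetric round-robin or resolvable-design pattern \dots together with a Hall-type check.'' No such incidence structure is exhibited, and the difficulty is not merely a bipartite matching condition that Hall's theorem could settle: the bulk must simultaneously be self-dominating for all $h$ classes (every bulk vertex, of every hub, needs a neighbour in \emph{every} class, including its own) and still retain enough spare degree to dominate the $(h-1)\ell$ leaf vertices not already covered by the private vertices. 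An existence claim of this kind needs an explicit construction, and none is given.

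The paper resolves exactly this point with a concrete gadget: two disjoint copies of $K_{h}$ joined by a perfect matching, where each of the $h$ hub classes has exactly one vertex in each copy and matched vertices lie in the same class. Inside such a gadget every vertex is adjacent to a representative of every other class (through the clique) and to a representative of its own class (through its matching edge), so a single gadget is self-dominating for all classes at degree cost $h$ per vertex, leaving $\Delta-h$ spare edges at every bulk vertex; taking $\left\lfloor \frac{r+3}{2}\right\rfloor$ copies (resp.\ $\left\lceil \frac{r+1}{2}\right\rceil$ copies when $\Delta=2r+1$) provides enough spare edges for each class to dominate the leaf set $\mathcal{O}$, and one checks that the leaves then have degree exactly $r+1$, so the maximum degree stays at $\Delta$. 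If you replace your unspecified ``near-regular incidence'' with this gadget, your argument closes and becomes the paper's proof; without it, the key existence statement remains unproved.
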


\begin{proof}
The constructions are very similar for different parities.
Thus we elaborate on the even case, and after that we point out the small changes in the details for the odd case. Suppose $\Delta=2r$. We partition the vertex set into $r+1$ large classes $\{C_1,C_2,\dots,C_{r+1}\}$ and the rest such that for every $i\in\{1,2,\dots,r+1\}$ the class $C_i$ is almost a total dominating set.
More precisely, there is exactly one non-dominated vertex $v_i$ with respect to $C_i$. It happens to be the case that $v_i$ also belongs to the class $C_i$, but all the remaining vertices can form singleton partition classes.

Our construction uses 3 types of building blocks illustrated in Figure \ref{fig:negyzetes}. The first one is a complete graph on the vertices $\{v_1,v_2,\dots,v_{r+1}\}$. The second one, denoted by $\cO$, consists of the further neighbors of $\{v_1,v_2,\dots,v_{r+1}\}$ such that there are $r$ vertices $\{C_{i,1},C_{i,2},\dots,C_{i,r}\}$ adjacent to $v_i$ for each $i\in\{1,2,\dots,r+1\}$.
Each vertex of $\cO$ forms a singleton partition class.
Its only purpose is to cover the corresponding $v_i$.

\begin{figure}[!h]
    \centering
    \includegraphics[width=\textwidth]{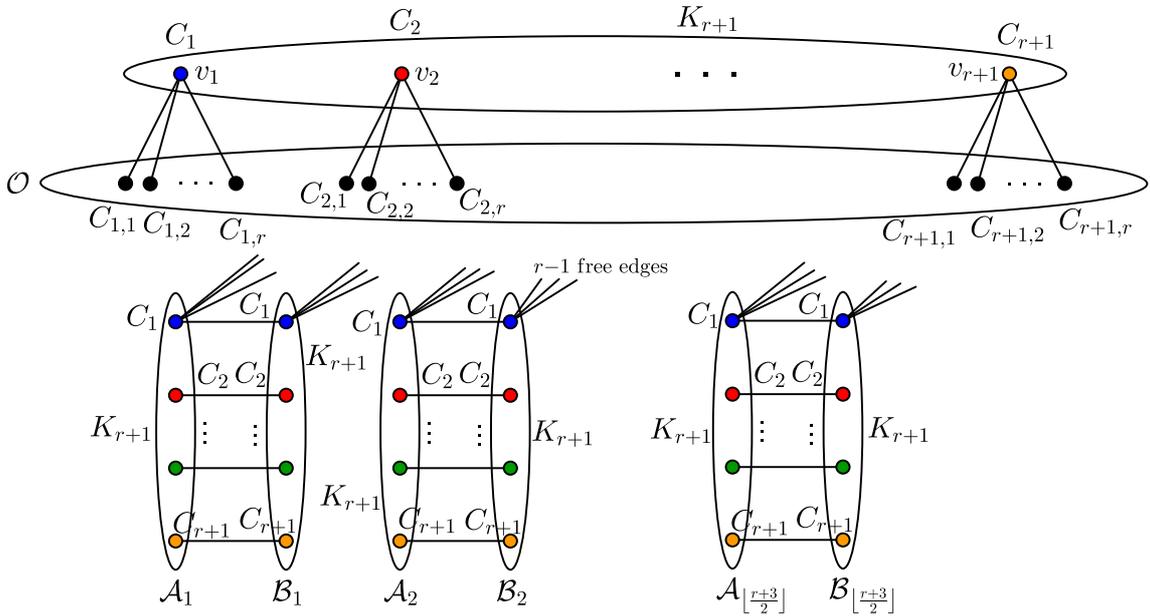}
    \caption{The sketch of a graph $G$ with $TC(G)\ge \frac{\Delta^2}{4}+\Delta+1$ for $\Delta=2r$.}
    \label{fig:negyzetes}
\end{figure}

The third block utilizes the same gadget $\left \lfloor\frac{r+3}{2}\right \rfloor$ times. The gadget consists of two complete graphs $\cA_i$ and $\cB_i$ on $r+1$ vertices each, and a perfect matching between them. All the vertices of the third block belong to $\{C_1,C_2,\dots,C_{r+1}\}$.
In both $\cA_i$ and $\cB_i$ each class of $\{C_1,C_2,\dots,C_{r+1}\}$ is represented exactly once such that the endpoints of the edges of the perfect matchings always belong to the same partition class.

Notice that the vertices of the third block have degree $r+1$ at this point. Therefore there are $r-1$ possible new edges incident to each of them. Since for every $i\in\{1,2,\dots,r+1\}$ the class $C_i$ must dominate all vertices but $v_i$, we use the possible new edges to connect the vertices of the class $C_i$ to those vertices of $\cO$, which are not yet dominated. That is $\cO\setminus\{C_{i,1},C_{i,2},\dots,C_{i,r}\}$ for the class $C_i$. We need $\left \lfloor \frac{r+3}{2} \right \rfloor$ copies of this gadget because $|\cO\setminus\{C_{i,1},C_{i,2},\dots,C_{i,r}\}|=r^2$, and the two points from $C_i$ in the same gadget can dominate at most $2(r-1)$. We stop using the possible new edges from the two vertices of $C_i$ of the last copy as soon as the class $C_i$ dominates all vertices but $v_i$. Observe that after using these additional edges to dominate the vertices of $\cO$ for all the classes of $\{C_1,C_2,\dots,C_{r+1}\}$ the degree of the vertices of $\cO$ is exactly $r+1$.
Hence in this graph $G$, the maximum degree is $\Delta=2r$, and the partition has $r(r+1)+r+1=(r+1)^2=\frac{\Delta^2}{4}+\Delta+1$ classes. It is straightforward to check that this is a total coalition partition. None of the classes forms a total dominating set alone, and for every $i\in\{1,2,\dots,r+1\}$ the class $C_i$ forms a total coalition with $C_{i,j}$ for each $j\in\{1,2,\dots,r\}$.

If $\Delta=2r+1$, then the construction changes very slightly. For every $i\in\{1,2,\dots,r+1\}$ there are 1 more neighbor $C_{i,r+1}$ of $v_i$ in $\cO$, and every vertex of the third block has 1 more possible new edge. Thus in the odd case, we need $\left \lceil \frac{r+1}{2} \right \rceil$ copies of the gadget because for each class $C_i$ we have to dominate $r(r+1)$ points in $\cO$ and the two points from the same gadget can dominate at most $2r$. The same argument with the same large classes works again, and it gives that the number of classes in this total coalition partition is $r+1+(r+1)(r+1)=\frac{\Delta+1}{2}\frac{\Delta+3}{2} = \frac{\Delta^2}{4}+\Delta+\frac{3}{4}$.
\end{proof}

\begin{remark}
    The construction works only for $\Delta\ge3$. For smaller values of $\Delta$, there is no room for possible new edges from the third block. However, there are graphs $G_1$ and $G_2$ for $\Delta=1$ and $2$, respectively, such that the their total coalition number reaches the same bound. For example $G_1=K_2$ and $G_2=C_4$ suffice.
\end{remark}

For $\Delta\ge6$, this construction shows the total coalition number can exceed $2\Delta$, moreover it is quadratic in $\Delta$. In the next theorem, we show that this construction is the best possible by proving the same upper bound on the total coalition number for any isolate-free graph.

\begin{theorem} \label{t:quadratic}
    For any isolate-free graph $G$, $TC(G)\le \left (\frac{\Delta(G)+2}{2}\right )^2=\frac{\Delta(G)^2}{4}+\Delta(G)+1$  holds.
\end{theorem}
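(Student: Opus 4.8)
The plan is to pass from $G$ to the total coalition graph $T=TCG(G,\Psi)$ of an optimal partition and to bound $k=TC(G)=|V(T)|$ by combining a vertex-cover estimate with the size constraint on total dominating sets. First I would record the clean reformulation: for a class $C_i$ let $D_i$ be the (nonempty) set of vertices of $G$ \emph{not} dominated by $C_i$; then $C_iC_j\in E(T)$ if and only if $D_i\cap D_j=\emptyset$. The crucial observation is that for every vertex $v\in V(G)$ the family $A_v$ of classes that dominate $v$ is a vertex cover of $T$: an edge $C_iC_j$ means $C_i\cup C_j$ is total dominating, so $v$ has a neighbour in $C_i\cup C_j$, i.e.\ $v$ is dominated by $C_i$ or by $C_j$, equivalently $C_i\in A_v$ or $C_j\in A_v$. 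Since the at most $\Delta$ neighbours of $v$ meet at most $\Delta$ classes, $|A_v|\le\Delta$, so $\tau(T)\le\Delta$ (in the same spirit as Lemma~\ref{l:cover}).

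Next I would fix a minimum vertex cover $K$ of $T$, set $p=|K|\le\Delta$, and write $I=V(T)\setminus K$ for the complementary independent set. Because $T$ is isolate-free, every class in $I$ has all of its (at least one) neighbours inside $K$, so counting the $I$--$K$ edges against the degree budget gives
\[
|I|\le\sum_{C\in K}\bigl(\deg_T(C)-\deg_K(C)\bigr)\le p\Delta-2e(K),
\]
where $e(K)$ is the number of edges of $T$ inside $K$ and I used $\Delta(T)\le\Delta$ (Lemma~\ref{l:maxfok}). Hence $k=p+|I|\le p(\Delta+1)-2e(K)$. If the core $K$ were a clique, then $e(K)=\binom{p}{2}$ and this collapses to $k\le p(\Delta+2-p)$, whose maximum over $p$ is attained at $p=\frac{\Delta+2}{2}$ and equals exactly $\bigl(\frac{\Delta+2}{2}\bigr)^2$; this matches the construction of Theorem~\ref{t:pelda}, where the $r+1$ heavy classes form a clique with $r$ pendants each. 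So the whole theorem reduces to showing that (a suitable choice of) the core is essentially complete, i.e.\ $2e(K)\ge p(\Delta+1)-\bigl(\frac{\Delta+2}{2}\bigr)^2$.

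The hard part is exactly this density statement, and I expect it to be the main obstacle, because it cannot be proved from $T$ alone: by the realisation lemma every isolate-free graph occurs as some $TCG$, so an abstract minimum vertex cover need not be a clique. The extra input must be the vertex sizes. Here I would use that every total dominating set, in particular every $C_i\cup C_j$ spanning an edge, has at least $n/\Delta$ vertices, so $|C_i|+|C_j|\ge n/\Delta$, together with $\sum_i|C_i|=n$ and $|D_i|\ge n-\Delta|C_i|$. The idea is that classes hosting pendants must be heavy (a small pendant forces its partner to have size close to $n/\Delta$), that heavy classes are few since their sizes sum to at most $n$, and that two non-adjacent heavy classes share an undominated vertex whose domination then drains the degree budgets of their coalition partners. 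Turning this into the exact clique bound is where the real difficulty lies: one must handle classes of intermediate size, select the core by size rather than as an abstract minimum vertex cover, and run a weighting argument tight enough to recover the precise constant $\bigl(\frac{\Delta+2}{2}\bigr)^2$. Finally, Lemma~\ref{l:matching} and Corollary~\ref{c:equality} should reconcile the two regimes, since $\nu(T)=\Delta$ already forces only $2\Delta\le\bigl(\frac{\Delta+2}{2}\bigr)^2$ classes, leaving the extremal clique-plus-pendants regime $\nu(T)\approx\Delta/2$ as the case to be pinned down.
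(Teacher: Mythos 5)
Your reduction is set up correctly but stops exactly where the proof has to happen, so there is a genuine gap. Everything up to $k\le p(\Delta+1)-2e(K)$ is sound: $A_v$ is indeed a vertex cover of size at most $\Delta$, and the degree count over $K$ is valid. But with no lower bound on $e(K)$ this only yields $k\le \Delta(\Delta+1)$, roughly four times the claimed bound, and the two facts you actually established ($\tau(T)\le\Delta$ and $\Delta(T)\le\Delta$) cannot by themselves give anything better: the disjoint union of $\Delta$ stars $K_{1,\Delta}$ satisfies both and has $\Delta(\Delta+1)$ vertices. So the entire theorem rides on your unproven density claim $2e(K)\ge p(\Delta+1)-\bigl(\frac{\Delta+2}{2}\bigr)^2$, which you explicitly leave open; the programme you sketch for it (pendants force heavy classes, heavy classes are few, non-adjacent heavy classes share an undominated vertex, re-select the core by size) is a list of plausible intentions, not an argument, and changing how the core is selected would also invalidate the counting you already did for an abstract minimum cover.

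The paper closes exactly this gap by working with a maximum matching rather than a minimum vertex cover. Fix a maximum matching $M$ of size $m$ in $TCG(G,\Psi)$; the case $m=\Delta(G)$ is settled by Corollary~\ref{c:equality}, so assume $m<\Delta(G)$. By maximality of $M$, for an edge $C_iC_j$ of $M$ either there is a single pendant vertex adjacent to both endpoints, or only one endpoint, say $C_i$, has pendant neighbours, say $d$ of them. Taking a vertex $v_i$ of $G$ not dominated by $C_i$, the neighbourhood $N(v_i)$ must meet all $d$ pendant classes, the partner $C_j$, \emph{and} at least one endpoint class of each of the other $m-1$ matching edges; since all these classes are pairwise disjoint, $\Delta(G)\ge (d+1)+(m-1)$, i.e.\ $d\le\Delta(G)-m$. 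This global sharing of the degree budget of $v_i$ between the pendants of one matching edge and all the other matching edges is precisely the mechanism your count lacks --- you never exploit that $A_{v}$ must cover all matching edges simultaneously. It immediately gives $k\le m(\Delta(G)-m+2)$, and maximizing this quadratic over $m$ yields $TC(G)\le\bigl(\frac{\Delta(G)+2}{2}\bigr)^2$. If you want to salvage your framework, replace the abstract cover $K$ by the endpoints of a maximum matching and apply your covering observation to the undominated vertex of each pendant-bearing matched class; that is essentially the paper's proof.
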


\begin{proof}
    Consider a total coalition partition $\Psi=\{C_1,C_2,\dots,C_k\}$ of $G$ such that $TC(G)=k$. Focus on the corresponding total coalition graph $TCG(G,\Psi)$. By Lemma \ref{l:maxfok}, we know that $\Delta(TCG(G,\Psi))\le \Delta(G)$. By Lemma \ref{l:matching} we get that there are at most $\Delta(G)$ independent edges in $TCG(G,\Psi)$. If $\nu(TCG(G,\Psi))=\Delta(G)$, then by Corollary~\ref{c:equality} we immediately get that $TC(G)=2\Delta(G)<\left (\frac{\Delta(G)+2}{2}\right )^2$.

    Suppose $\nu(TCG(G,\Psi))=m<\Delta(G)$ and fix a maximum matching $M$ with $m$ edges. For any edge $C_iC_j$ of $M$, let us estimate the number of additional vertices of the total coalition graph, which are adjacent to either $C_i$ or $C_j$ or both. If there are edges connecting additional vertices to both $C_i$ and $C_j$, then it leads to a contradiction to $M$ being a maximum matching unless there is only one additional vertex which is adjacent to both $C_i$ and $C_j$.

    Otherwise at most one of the partition classes $C_i,C_j$ is adjacent to any additional vertices. Assume  $C_i$ is adjacent to $d$ additional vertices of the total coalition graph, while $C_j$ is not adjacent to any additional vertices. Since $C_i$ is not a total dominating set in $G$, there exists a vertex $v_i$ of $G$, which is not dominated by $C_i$. All of those partition classes, which are adjacent to $C_i$, must contain at least one vertex from the neighborhood of $v_i$ in $G$. The $m-1$ other edges of $M$ also give rise to total coalitions, hence at least one end-vertex of these edges also must contain at least one vertex from the neighborhood of $v_i$ in $G$. This gives a bound on $d$ with respect to $\Delta(G)$ and $\nu(TCG(G,\Psi))$: $$\Delta(G)-(d+1)\ge m-1 ~\quad~\Longleftrightarrow~\quad~\Delta(G)-m\ge d.$$

    Since $\Delta(G)-m\ge 1$, there can be more additional vertices in the second case. This leads to an upper bound $k\le m(\Delta(G)-m+2)$. The right-hand side is a quadratic function of $m$, and it takes its maximum value if $m=\frac{\Delta(G)+2}{2}=\frac{\Delta(G)}{2}+1$. Hence $TC(G)\le \left (\frac{\Delta(G)+2}{2}\right )^2=\frac{\Delta(G)^2}{4}+\Delta(G)+1$.
\end{proof}

\begin{remark}\label{r:qodd}
    If $\Delta(G)$ is odd, then $\frac{\Delta(G)+2}{2}$ is not an integer. Therefore, the maximum value of the quadratic function is taken by choosing $m_1=\left \lfloor \frac{\Delta(G)+2}{2} \right \rfloor = \frac{\Delta(G)+1}{2}$ or $m_2=\left \lceil \frac{\Delta(G)+2}{2} \right \rceil = \frac{\Delta(G)+3}{2}$. In both cases, we get a slightly improved upper bound $TC(G)\le \frac{\Delta(G)+1}{2}\frac{\Delta(G)+3}{2}=\frac{\Delta(G)^2}{4}+\Delta(G)+\frac{3}{4}$.
\end{remark}

Theorem \ref{t:pelda} and \ref{t:quadratic} shows that our general upper bound for isolate-free graphs is sharp for any $\Delta\ge1$.

\section{Upper bound on $TC(G)$ in terms of $\delta(G)$ and $\Delta(G)$} \label{s:minmax}

Motivated by \cite{TCG}, where they studied the cases $\delta(G)=1,2$, we are intrigued to find a general upper bound, which depends not only on the maximum degree, but on the minimum degree, too. By improving Lemma \ref{l:matching}, we are able to deduce a general upper bound on the total coalition number in terms of $\delta(G)$ and $\Delta(G)$.

As before, let $G$ denote an isolate-free graph with $TC(G)=k$ and let $\Psi=\{C_1,C_2,\dots,C_k\}$ denote a total coalition partition of $G$, and $TCG(G,\Psi)$ the corresponding total coalition graph.

\begin{lemma}[improved version of Lemma \ref{l:matching}] \label{l:imp}
    The size of the maximum matching in $TCG(G,\Psi)$ is at most the minimum degree of $G$, i.e. $\nu(TCG(G,\Psi))\le \delta(G)$.
\end{lemma}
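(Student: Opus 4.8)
The plan is to replace the global counting argument behind Lemma \ref{l:matching} by a \emph{local} argument centered at a single vertex of minimum degree. First I would fix a maximum matching $M=\{C_{i_1}C_{j_1},\dots,C_{i_m}C_{j_m}\}$ in $TCG(G,\Psi)$, so that $m=\nu(TCG(G,\Psi))$. The crucial structural observation is that the $2m$ classes appearing as endpoints of $M$ are pairwise distinct (since $M$ is a matching) and, because $\Psi$ is a vertex partition of $G$, pairwise disjoint as subsets of $V$. Consequently the $m$ unions $U_\ell:=C_{i_\ell}\cup C_{j_\ell}$ for $\ell\in\{1,\dots,m\}$ are pairwise disjoint, and each $U_\ell$ is a total dominating set of $G$ because $C_{i_\ell}C_{j_\ell}$ is an edge of the total coalition graph.

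The key step is then to exploit all $m$ edges simultaneously against one well-chosen vertex. Let $w$ be a vertex of $G$ with $d(w)=\delta(G)$. For each $\ell$, since $U_\ell$ is a total dominating set, $w$ has at least one neighbor in $U_\ell$. As the sets $U_1,\dots,U_m$ are pairwise disjoint, these neighbors are distinct vertices of $G$, so $w$ has at least $m$ distinct neighbors. Hence $\delta(G)=d(w)\ge m=\nu(TCG(G,\Psi))$, which is exactly the claimed inequality.

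The conceptual content — and the only place requiring care — is the decision to evaluate the total domination property at a fixed minimum-degree vertex rather than to sum cardinalities over the matching as in Lemma \ref{l:matching}. The original proof loses information by bounding $|U_\ell|\ge n/\Delta$ and comparing to the total budget $n$, which only yields $\nu\le\Delta$; the present argument instead counts, at a single vertex $w$, how many pairwise disjoint total dominating sets can each force a private neighbor of $w$, and this count is automatically capped by $d(w)$. I do not expect a genuine obstacle here: the disjointness of the $U_\ell$ follows immediately from $\Psi$ being a partition and $M$ being a matching, and the rest is the defining property of total domination. The main thing to state explicitly is that distinct neighbors arise precisely because the $U_\ell$ share no vertices, so no single neighbor of $w$ can be double-counted across two matching edges.
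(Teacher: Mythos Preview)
Your proof is correct and follows essentially the same approach as the paper: both fix a vertex of minimum degree and observe that the pairwise disjoint total dominating sets arising from the matching edges must each contain a distinct neighbor of that vertex. Your write-up is more detailed in justifying the disjointness of the sets $U_\ell$, but the underlying argument is identical.
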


\begin{proof}
    Consider a vertex $v\in V$ with $d(v)=\delta(G)$. The pair of partition classes corresponding to the endpoints of any edge of the maximum matching forms a total coalition. Thus these pairwise disjoint total dominating sets must dominate $v$ as well. However, $v$ can be dominated only via its neighbors and $|N(v)|=\delta(G)$ hence $\nu(TCG(G,\Psi))\le \delta(G)$ holds.
\end{proof}

Now, let us incorporate this improved bound into the proof of Theorem \ref{t:quadratic}. Recall that the size of the maximum matching in the total coalition graph of the graph attaining the upper bound of Theorem \ref{t:quadratic} is $\left\lfloor \frac{\Delta(G)+2}{2} \right \rfloor$ thus the improved bound on $\nu(TCG(G,\Psi))$ is a restriction only if $\delta(G)<\left \lfloor\frac{\Delta(G)+2}{2}\right \rfloor$.

\begin{theorem}\label{t:minmax}
    If $G$ is an isolate-free graph with $\delta(G)<\left \lfloor\frac{\Delta(G)+2}{2}\right \rfloor$, then $TC(G)\le \delta(G)(\Delta(G)-\delta(G)+2)$ holds.
\end{theorem}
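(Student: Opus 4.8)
The plan is to reuse the core inequality that is established \emph{inside} the proof of Theorem~\ref{t:quadratic} and then feed in the sharper matching bound from Lemma~\ref{l:imp}. Recall that in that proof, for a maximum matching $M$ of size $m=\nu(TCG(G,\Psi))$, each edge of $M$ is shown to be responsible for at most $\Delta(G)-m$ additional (unmatched) classes of the total coalition graph (the ``both endpoints'' case contributes a single vertex, which is fine since $\Delta(G)-m\ge 1$, and the one-sided case contributes $d\le\Delta(G)-m$). Summing over the $m$ edges and adding back the $2m$ matched classes yields the intermediate bound
\[
TC(G)=k\le m\bigl(\Delta(G)-m+2\bigr),
\]
valid for the \emph{actual} value $m=\nu(TCG(G,\Psi))$ of any isolate-free $G$. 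So the first step is simply to isolate this inequality as the starting point, independently of where the maximum of its right-hand side happens to lie.

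The second step is to substitute the improved matching bound $m\le\delta(G)$ of Lemma~\ref{l:imp} into the quadratic $f(m)=m(\Delta(G)-m+2)$. The key observation is that $f$ is a downward-opening parabola with vertex at $m=\frac{\Delta(G)+2}{2}$, hence $f$ is strictly increasing on $\bigl[0,\frac{\Delta(G)+2}{2}\bigr]$. The hypothesis $\delta(G)<\bigl\lfloor\frac{\Delta(G)+2}{2}\bigr\rfloor$ forces $\delta(G)<\frac{\Delta(G)+2}{2}$, so the entire feasible range $1\le m\le\delta(G)$ lies strictly to the left of the vertex (note $m\ge 1$, since every class of $\Psi$ is incident to at least one total-coalition edge). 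Therefore $f$ is increasing on $\{1,\dots,\delta(G)\}$ and attains its maximum over all feasible $m$ precisely at the boundary $m=\delta(G)$, giving
\[
TC(G)\le f\bigl(\delta(G)\bigr)=\delta(G)\bigl(\Delta(G)-\delta(G)+2\bigr).
\]

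The conceptual content is therefore entirely in aligning the hypothesis with the monotonicity of $f$: in the regime $\delta(G)<\bigl\lfloor\frac{\Delta(G)+2}{2}\bigr\rfloor$ the constraint $m\le\delta(G)$ bites before the unconstrained optimum $m=\frac{\Delta(G)+2}{2}$ of Theorem~\ref{t:quadratic} is reached, so the true maximizer of $f$ over the admissible $m$ slides to the boundary $m=\delta(G)$. I expect no real obstacle, only two points of bookkeeping: one must check that the per-edge counting of Theorem~\ref{t:quadratic} is genuinely valid for the realized matching number $m$ (not merely at the extremal value $m=\frac{\Delta(G)+2}{2}$), and one must verify the strict inequality $\delta(G)<\frac{\Delta(G)+2}{2}$ for both parities of $\Delta(G)$ so that no rounding pushes $m=\delta(G)$ onto or past the vertex. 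Both are immediate, which is exactly why the statement reads as a direct corollary of Theorem~\ref{t:quadratic} once it is sharpened by Lemma~\ref{l:imp}.
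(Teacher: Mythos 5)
Your proposal is correct and follows essentially the same route as the paper: start from the intermediate bound $k\le m(\Delta(G)-m+2)$ established in the proof of Theorem~\ref{t:quadratic}, cap $m=\nu(TCG(G,\Psi))$ by $\delta(G)$ via Lemma~\ref{l:imp}, and use that the downward-opening parabola $f(m)=m(\Delta(G)-m+2)$ is increasing to the left of its vertex $m=\frac{\Delta(G)+2}{2}$, so under the hypothesis the maximum over feasible $m$ is attained at $m=\delta(G)$. Your bookkeeping remarks (that the per-edge count applies since $m\le\delta(G)<\Delta(G)$, and that $m\ge 1$) are fine and, if anything, slightly more careful than the paper's write-up, which even misstates the parabola's roots.
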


\begin{proof}
    The argument is the same as in the proof of Theorem \ref{t:quadratic}, but now the quadratic upper bound $f(m)=m(\Delta(G)-m+2)$ where $m=\nu(TCG(G,\Psi))$ on the number of partition classes in the total coalition partition cannot take its maximum value because we assumed that $\delta(G)<\left \lfloor\frac{\Delta(G)+2}{2}\right \rfloor$ and parameter $m=\nu(TCG(G,\Psi))\le \delta(G)$ by Lemma \ref{l:imp}.

    Nevertheless, the leading coefficient of $f(m)$ is negative thus the graph of the quadratic function is a downward open parabola. Since $0$ and $\frac{\Delta(G)+2}{2}$ are the two roots of this function hence it takes its maximum value with respect to the constraint $\delta(G)<\left \lfloor\frac{\Delta(G)+2}{2}\right \rfloor$ by choosing $m=\nu(TCG(G,\Psi))=\delta(G)$. Thus $TC(G)\le f(\delta(G))=\delta(G)(\Delta(G)-\delta(G)+2)$.
\end{proof}

For $\delta(G)=1,2$ this bound gives back the results of \cite{TCG}. The following theorem proves that this bound is also tight, and the construction attaining the bound is very similar to the one in the proof of Theorem \ref{t:pelda}.

\begin{theorem}\label{t:peldaminmax}
   For any $\Delta\ge 2$, there exists an isolate-free graph $G$ such that $\delta(G)<\left \lfloor\frac{\Delta(G)+2}{2}\right \rfloor$ and $TC(G)\ge\delta(G)(\Delta(G)-\delta(G)+2)$.
\end{theorem}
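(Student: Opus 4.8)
The plan is to mirror the construction from the proof of Theorem~\ref{t:pelda}, tuning its two free parameters so that the matching number of the resulting total coalition graph becomes $\delta$ rather than $\left\lfloor\frac{\Delta+2}{2}\right\rfloor$. Concretely, I would take $\delta$ large classes $C_1,\dots,C_\delta$ (instead of $r+1$), each with a single non-dominated vertex $v_i\in C_i$, and attach to every $C_i$ exactly $\Delta-\delta+1$ singleton leaf classes (instead of $r$). This already pins down the count: $\delta$ large classes together with $\delta(\Delta-\delta+1)$ leaves give $\delta(\Delta-\delta+2)$ classes in total, so if the partition is valid then $TC(G)\ge\delta(\Delta-\delta+2)$. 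The hypothesis $\delta<\left\lfloor\frac{\Delta+2}{2}\right\rfloor$ forces $\delta<\Delta$, hence $\Delta-\delta\ge 1$, which is precisely the ``room for new edges'' the construction relies on; and for any $\Delta\ge 2$ an admissible $\delta$ exists (for instance $\delta=1$).

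The three building blocks are the direct analogues of those in Theorem~\ref{t:pelda}. First, a complete graph $K_\delta$ on $\{v_1,\dots,v_\delta\}$, so that each $v_i$ is dominated by every class $C_{i'}$ with $i'\neq i$ (via $v_{i'}$) but by no vertex of $C_i$, making $v_i$ the unique undominated vertex of $C_i$. Second, the block $\cO$: for each $i$ we add $\Delta-\delta+1$ singleton classes adjacent to $v_i$; these cover $v_i$, and since $v_i\in C_i$ they are automatically dominated by $C_i$. Third, the gadget block, where I reuse copies of two complete graphs $\cA$ and $\cB$ on $\delta$ vertices joined by a perfect matching pairing equal classes; this supplies each large class with vertices that dominate one another (through the matching edge) and that dominate every gadget vertex (through completeness), while leaving $\Delta-\delta$ free incidences per gadget vertex to be spent as new edges toward $\cO$.

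I would then verify that each $C_i$ dominates everything except $v_i$: the other $v_{i'}$ through the $K_\delta$, its own leaves through $v_i$, all gadget vertices through the completeness-plus-matching structure, and the $(\delta-1)(\Delta-\delta+1)$ foreign leaves $C_{i',j}$ ($i'\neq i$) through new edges emanating from the gadget vertices of $C_i$. Consequently each pair $\{C_i,C_{i,j}\}$ is a total coalition, every leaf has $C_i$ as partner and every large class has its leaves as partners, so $\Psi$ is a valid total coalition partition of the claimed cardinality. The degree bookkeeping then gives $d(v_i)=(\delta-1)+(\Delta-\delta+1)=\Delta$, each leaf of degree exactly $\delta$ (one edge to its $v_i$ plus one new edge from each of the $\delta-1$ foreign classes), and each gadget vertex of degree between $\delta$ and $\Delta$; hence $\delta(G)=\delta$ and $\Delta(G)=\Delta$, as required.

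The main obstacle is the packing/counting step that fixes the number of gadget copies, exactly the point where the proof of Theorem~\ref{t:pelda} split into floor/ceiling cases. Each class $C_i$ must route $(\delta-1)(\Delta-\delta+1)$ new edges to the foreign leaves, while each of its two gadget vertices per copy offers only $\Delta-\delta$ free incidences; taking $g=\left\lceil\frac{(\delta-1)(\Delta-\delta+1)}{2(\Delta-\delta)}\right\rceil$ copies, I must check that these incidences can be distributed so that no gadget vertex exceeds degree $\Delta$ and, simultaneously, every foreign leaf receives exactly one edge from each foreign class (keeping all leaves at degree exactly $\delta$, which is what secures $\delta(G)=\delta$). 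This is a routine bipartite assignment with capacities, since the total capacity dominates the demand, and leftover unused incidences are harmless because every gadget vertex is internally dominated regardless. I would finish by treating the degenerate case $\delta=1$ separately, where no gadgets and no cross-domination are needed and $G$ is simply the star $K_{1,\Delta}$, whose partition into $\{v_1\}$ and its $\Delta$ leaves already realises $1\cdot(\Delta-1+2)=\Delta+1$ classes.
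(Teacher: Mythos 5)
Your proposal is correct and follows essentially the same route as the paper: the paper's proof is exactly the Theorem~\ref{t:pelda} construction re-parametrised with $\delta(G)$ large classes, $\Delta(G)-\delta(G)+1$ singleton leaves per class, gadgets of two $K_{\delta}$'s joined by a class-preserving perfect matching, and new edges routed from gadget vertices to foreign leaves while keeping every leaf at degree exactly $\delta(G)$. Your gadget count $\left\lceil\frac{(\delta-1)(\Delta-\delta+1)}{2(\Delta-\delta)}\right\rceil$ is consistent with the paper's $\left\lfloor\frac{\delta(G)+1}{2}\right\rfloor$, and your degree bookkeeping matches the paper's verification that $\delta(G)$ and $\Delta(G)$ come out exactly right.
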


\begin{proof}
Using the same argument as before, it is straightforward to check that the partition illustrated in Figure \ref{fig:minmax} is indeed a total coalition partition with $\delta(G)(\Delta(G)-\delta(G)+2)$ partition classes.

\begin{figure}[!h]
    \centering
    \includegraphics[width=\textwidth]{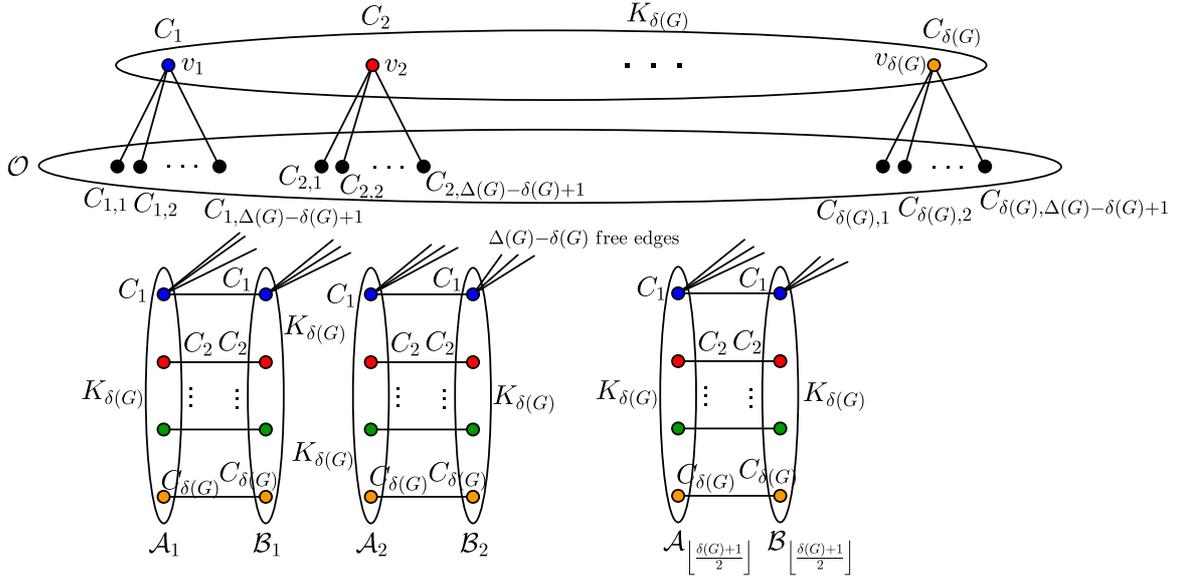}
    \caption{The sketch of a graph $G$ with $TC(G)\ge \delta(G)(\Delta(G)-\delta(G)+2)$, if $\delta(G)<\left \lfloor\frac{\Delta(G)+2}{2}\right \rfloor$.}
    \label{fig:minmax}
\end{figure}

Notice that each vertex in the third block has $\delta(G)$ neighbors within the third block, therefore they have $\Delta(G)-\delta(G)$ possible new edges to dominate the not yet dominated vertices of $\cO$. For every $i\in\{1,2,\dots,\delta(G)\}$ the class $C_i$ needs to dominate $\cO\setminus\{C_{i,1},C_{i,2},\dots,C_{i,\Delta(G)-\delta(G)+1}\}$ which is $(\delta(G)-1)(\Delta(G)-\delta(G)+1)$ vertices of $\cO$ and for each copy of the gadget the two vertices of $C_i$ have $2(\Delta(G)-\delta(G))$ possible new edges. Hence $\left \lfloor \frac{\delta(G)+1}{2} \right \rfloor$ copy suffices since $\Delta(G)-\delta(G)\ge\delta(G)-1$ if $\delta(G)<\left \lfloor\frac{\Delta(G)+2}{2}\right \rfloor$ and $\Delta(G)\ge2$.

However, we need to be careful: the minimum degree of our construction must be exactly $\delta(G)$. The vertices in $\cO$ has the least number of neighbors, one from the first block and $\delta(G)-1$ from the third block which is exactly $\delta(G)$. Thus we stop using the possible new edges from the last copy of the gadget as soon as the partition class $C_i$ dominates all vertices but $v_i$.
\end{proof}

\begin{remark}
    If $\Delta(G)=1$ then consequently $\left \lfloor\frac{\Delta(G)+2}{2}\right \rfloor = 1 > \delta(G)$ leads to a contradiction because the minimum degree of any isolate-free graph has to be at least 1.
\end{remark}

\section{What does the total coalition graph look like?}\label{s:tcg}

From the proof of Theorem \ref{t:quadratic}, we can recognize a specific subgraph of the total coalition graph if the corresponding total coalition partition is of maximum cardinality. But what can we say about the whole total coalition graph? What can it consist of beyond the union of $\left \lfloor \frac{\Delta(G)+2}{2} \right \rfloor$ stars?

By Remark \ref{r:qodd}, we can immediately see that if $\Delta(G)$ is odd, then the total coalition graph can have $\left \lfloor \frac{\Delta(G)+2}{2}\right \rfloor$ stars on $\left \lceil \frac{\Delta(G)+2}{2}\right \rceil$ vertices or the other way around, $\left \lceil \frac{\Delta(G)+2}{2}\right \rceil$ stars on $\left \lfloor \frac{\Delta(G)+2}{2}\right \rfloor$ vertices. In the following, we prove that if $\Delta(G)\ge 5$, then this can be the only difference.

\begin{theorem}\label{t:stareven}
    If $G$ is an isolate-free graph with $\Delta(G)$ even and $\Delta(G)\ge 6$ such that $TC(G)=\frac{\Delta(G)^2}{4} + \Delta(G)+1$, then the total coalition graph is determined up to isomorphism, it does not depend on the realising total coalition partition.
\end{theorem}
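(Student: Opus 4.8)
The plan is to pin down the optimal total coalition graph by first extracting from the proof of Theorem \ref{t:quadratic} every equality condition an optimal partition forces, and then showing these conditions leave exactly one graph up to isomorphism. Write $\Delta = 2r$ with $r \ge 3$. Since $\Delta$ is even, $\frac{\Delta+2}{2} = r+1$ is an integer and is the unique maximizer of $f(m) = m(\Delta - m + 2)$; moreover $2\Delta = 4r < (r+1)^2$ for $r \ge 2$, so the case $\nu(TCG(G,\Psi)) = \Delta$ of Corollary \ref{c:equality} cannot occur at the optimum. Hence every optimal partition has $\nu(TCG(G,\Psi)) = r+1$, and I fix a maximum matching $M = \{a_1b_1, \dots, a_{r+1}b_{r+1}\}$.

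First I would nail the local structure, which is where the hypothesis $\Delta \ge 6$ enters. Reaching $f(r+1) = (r+1)^2$ forces equality in every estimate of the proof of Theorem \ref{t:quadratic}: (i) every edge of $M$ must be of the second type, with exactly $d = \Delta - m = r-1$ additional neighbours concentrated at one endpoint, the \emph{center} $a_s$ — a first-type edge contributes only $1$ additional vertex, and for $r \ge 3$ each such edge creates a deficit $r-2 > 0$ that makes the required total $(r+1)(r-1)$ unreachable; (ii) the sets of additional neighbours of distinct edges are pairwise disjoint, so the unmatched vertices split into the private neighbourhoods $L_s$ of the centers, $|L_s| = r-1$; and (iii) since the unmatched vertices form an independent set (otherwise $M$ extends) and each lies in a single such set, every unmatched vertex is a pendant leaf adjacent only to its center. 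The unmatched part is thus completely understood, and it remains to determine the graph induced on the $2(r+1)$ matched vertices.

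The target is the corona $K_{r+1}\circ\overline{K_r}$: the $r+1$ centers form a clique, each carrying its $r-1$ private leaves together with its matched partner $b_s$ as a pendant, giving $(r+1)(r+1)$ vertices and centers of degree exactly $\Delta$; one checks directly that the construction of Theorem \ref{t:pelda} realizes precisely this graph, so it is the only candidate. To force it in general, for each center $a_s$ take the vertex $v_s$ of $G$ not dominated by the class $a_s$. Every class adjacent to $a_s$, and one endpoint of each of the other $r$ edges of $M$, must contain a neighbour of $v_s$; counting these in the (pairwise distinct, since the classes are disjoint) neighbourhood of $v_s$ shows $\deg_G(v_s) = 2r$, that exactly one endpoint of each other edge dominates $v_s$, and that the $2r$ classes dominating $v_s$ are exactly $\{b_s\}\cup L_s$ together with one chosen endpoint per other edge, whence $N_{TCG(G,\Psi)}(a_s)$ is contained in this set. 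I would then show that some center attains degree exactly $\Delta$, apply Lemma \ref{l:cover} so that its neighbourhood is a vertex cover, and observe that the pendant edges of every \emph{other} center (which exist because $r-1 \ge 2$) can only be covered through the other centers; this forces that center to be adjacent to all the rest and forces every chosen endpoint to be a center rather than a partner. Bootstrapping this vertex-cover argument across the centers then gives that all centers have degree $\Delta$ and are pairwise adjacent, while matching maximality (no augmenting path may appear) rules out any further edge at a partner $b_s$, so each $b_s$ is pendant.

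I expect the genuine obstacle to be exactly this last step: upgrading the local inequalities $\deg_{TCG}(a_s) \le \Delta$ to equalities, that is, excluding the sparser center-configurations (for instance $r+1$ disjoint stars, or centers spanning a proper subgraph of $K_{r+1}$) that satisfy all of the domination counts above yet must be ruled out on global grounds. The leverage making this possible is precisely $\Delta \ge 6$: each center owns at least two private pendant leaves, and the edges to those leaves must be covered by the neighbourhood of any maximum-degree vertex, which is what drags every center into a single clique. Organizing this covering/bootstrapping carefully, and disposing of the partner vertices by matching maximality, is the technical heart of the argument; the parity hypothesis ($\Delta$ even) is what guarantees the unique maximizer $m = r+1$ and hence a single isomorphism type, the odd case splitting into the two possibilities recorded in Remark \ref{r:qodd}.
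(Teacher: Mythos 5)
Your structural setup is correct and matches the paper's: the equality analysis of the proof of Theorem \ref{t:quadratic} does force $\nu(TCG(G,\Psi))=\frac{\Delta(G)+2}{2}$, a decomposition into that many disjoint stars whose unmatched vertices are pendant leaves of their own center, and your per-center count (showing $\deg_G(v_s)=\Delta(G)$ and confining $N_{TCG}(a_s)$ to $\{b_s\}\cup L_s$ plus one endpoint per other matching edge) is exactly the right tool. The problem is that the step you yourself flag as ``the genuine obstacle'' --- forcing the centers to span a clique, and in particular excluding the disjoint union of stars --- \emph{is} the content of the theorem, and your plan for it cannot work as sketched. You propose to first ``show that some center attains degree exactly $\Delta$'' and then bootstrap with Lemma \ref{l:cover}; but the configuration you most need to exclude (centers pairwise non-adjacent) has every vertex of the total coalition graph of degree at most $\frac{\Delta(G)}{2}<\Delta(G)$, so in that scenario there is no degree-$\Delta(G)$ vertex to start from and Lemma \ref{l:cover} gives no traction. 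The plan is circular: the existence of a maximum-degree center is a \emph{consequence} of the clique structure, not a route to it. A second, smaller flaw: matching maximality alone does not rule out edges of the form $b_s a_t$ ($t\ne s$); re-matching $a_s$ to a private leaf merely trades $b_t$ for $\ell_s$ and yields a matching of the same size, so these edges also need a counting argument, not maximality.

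The missing idea, which is how the paper closes both gaps, is a \emph{direct pairwise contradiction} rather than a bootstrap. Suppose two centers $s,s'$ are non-adjacent in the total coalition graph, so $C_s\cup C_{s'}$ is not a total dominating set, and pick a vertex $w$ of $G$ dominated by neither $C_s$ nor $C_{s'}$. Every leaf class of the star at $s$ forms a total coalition with $C_s$, hence must contain a neighbour of $w$; the same holds for every leaf class of the star at $s'$; and each of the remaining $\frac{\Delta(G)+2}{2}-2$ matching edges contributes at least one further class meeting $N(w)$. These classes are pairwise disjoint, so $\deg_G(w)\ge \frac{\Delta(G)}{2}+\frac{\Delta(G)}{2}+\frac{\Delta(G)}{2}-1=\frac{3\Delta(G)}{2}-1>\Delta(G)$, a contradiction; hence $S$ spans a complete graph, with no case analysis on degrees. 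The same style of count, after re-choosing the maximum matching to avoid a prescribed leaf $\ell$ (possible precisely because each star has at least three leaves when $\Delta(G)\ge 6$), yields $\Delta(G)+1$ disjoint classes meeting the neighbourhood of a non-dominated vertex and thereby kills every edge between a center and a foreign leaf, including your problematic $b_s a_t$ edges. With these two counting arguments your final ``bootstrapping'' step becomes unnecessary, and the rest of your write-up goes through.
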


\begin{proof}
    Since $TC(G)=\frac{\Delta(G)^2}{4} + \Delta(G)+1$, the total coalition graph must contain the disjoint union of $\frac{\Delta(G)+2}{2}$ stars on $\frac{\Delta(G)+2}{2}$ vertices and the size of the maximum matching is also equal to $\frac{\Delta(G)+2}{2}$ by the proof of Theorem \ref{t:quadratic}. It is clear that these are the only vertices of the total coalition graph, and the question is what type of further edges can occur?

\begin{figure}[!ht]
    \centering
    \includegraphics[width=\textwidth]{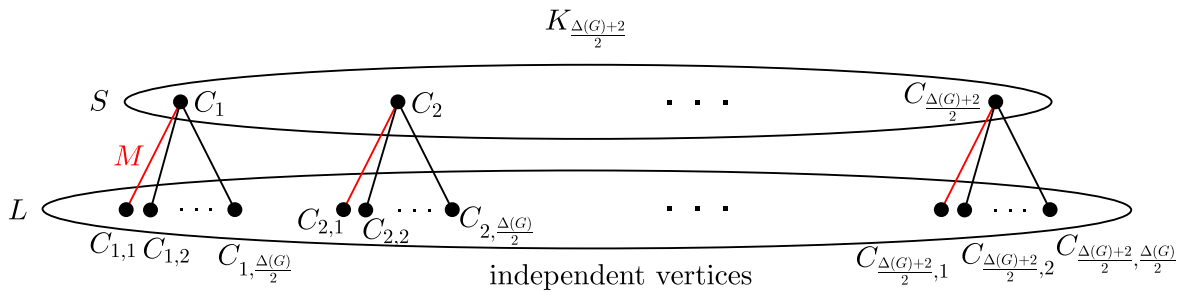}
    \caption{The total coalition graph if $\Delta(G)\ge 6$, $\Delta(G)$ even and $TC(G)=\frac{\Delta(G)^2}{4} + \Delta(G)+1$.}
    \label{fig:tcg}
\end{figure}

    Denote the set of centers of these stars by $S$ and the leaves by $L$. These stars have at least 3 leaves since $\Delta(G)\ge 6$.
    Hence an edge between any two vertices of $L$ yields a larger matching, a contradiction. 
    Next, there can be no edges between $S$ and $L$. Suppose to the contrary that there is an edge $s\ell$ between $s\in S$ and $\ell\in L$. Let us fix a maximum matching $M$ avoiding $\ell$. This can be done because $M$ consists of $\frac{\Delta(G)+2}{2}$ independent edges incident to each vertex of $S$, and in every star there are at least $3$ possibilities to choose from, thus $\ell$ can be avoided. 

    Consider the set of vertices $C_s$ of $G$ corresponding to $s$ in the total coalition partition and fix a vertex $v$, which is not dominated by $C_s$. Only the vertices of $N(v)$ can dominate $v$, so we can count the number of partition classes, which have to intersect $N(v)$ non-trivially. There are $\frac{\Delta(G)+2}{2}-1$ leaf neighbors of $s$ in the corresponding star. There are $\frac{\Delta(G)+2}{2}-1$ edges of $M$ avoiding $s$. For each such matching edge, the two partition classes corresponding to the endpoints form a total coalition. Therefore at least one of these partition classes must intersect $N(v)$. Moreover, the partition class $C_{\ell}$ corresponding to the leaf $\ell$ also has to dominate $v$. Altogether this is $\left (\frac{\Delta(G)+2}{2}-1\right )+\left (\frac{\Delta(G)+2}{2}-1 \right )+1 = \Delta(G)+1$ partition classes and all of them must intersect $N(v)$ non-trivially, which contradicts with $|N(v)|\le \Delta(G)$. Hence the further edges of the total coalition graph must be spanned by $S$.

    We claim that the subgraph spanned by $S$ must be a complete graph. Suppose on the contrary that for some $s\ne s'\in S$ the edge $ss'$ is missing from the total coalition graph. Thus the partition class $C_{s'}$ corresponding to $s'$ does not intersect $N(v)$, but from the argument of the previous paragraph we know that no matter how we chose the edge for $M$ from the star centered around $s'$ the union of the two partition classes corresponding to the two endpoints must intersect $N(v)$. Hence all the partition classes corresponding to the leaves of the star with center $s'$ must intersect $N(v)$ which contradicts again to  $|N(v)|\le \Delta(G)$. 
    Hence the total coalition graph is determined up to isomorphism as illustrated in Figure~\ref{fig:tcg}.
\end{proof}

Consider an isolate-free graph attaining the upper bound of Theorem \ref{t:quadratic}. By the proof of Theorem \ref{t:quadratic}, the size of the maximum matching is fixed, thus the existence of the three types of further edges depends only on the number of leaves in each star. There can be no further edges spanned by $L$, if there are at least three leaves in each star. For the other two types of edges, between $S$ and $L$, and spanned by $S$, it is necessary to have at least two leaves to conclude that there are no edges between $S$ and $L$ and that the subgraph spanned by $S$ is complete. With these observations we are able to show a similar result if the maximum degree is odd.

\begin{theorem}\label{t:starodd}
    If $G$ is an isolate-free graph with $\Delta(G)\ge 5$, $\Delta(G)$ odd such that $TC(G)=\left \lfloor \frac{\Delta(G)^2}{4} + \Delta(G)+1 \right \rfloor$, then the total coalition graph $H$ is determined up to isomorphism if the size of the maximum matching is fixed.
\end{theorem}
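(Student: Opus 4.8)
The plan is to run the proof of Theorem~\ref{t:stareven} with the matching size $m=\nu(TCG(G,\Psi))$ promoted to a fixed parameter. By Remark~\ref{r:qodd}, when $\Delta(G)=2r+1$ is odd the extremal value $TC(G)=(r+1)(r+2)$ is attained only for $m\in\{r+1,r+2\}$, and these two values produce genuinely different (non-isomorphic) graphs, which is exactly why the statement fixes $m$. By the equality analysis in the proof of Theorem~\ref{t:quadratic}, the total coalition graph then consists of precisely $m$ vertex-disjoint stars, each with $\Delta(G)-m+1$ leaves, and these stars exhaust all $TC(G)$ vertices. For $m=r+1$ each star has $r+1$ leaves, while for $m=r+2$ each star has $r$ leaves; since $r\ge 2$ because $\Delta(G)\ge 5$, every star carries at least two leaves. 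As in the even case I write $S$ for the centres and $L$ for the leaves, and the task is to pin down the further edges among $S\cup L$.

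First I would exclude all $S$--$L$ edges and show that $S$ spans a complete graph, repeating verbatim the two counting arguments of Theorem~\ref{t:stareven}. The only role the number of leaves plays there is (i) selecting a maximum matching $M$ that avoids a prescribed leaf $\ell$ and (ii) forcing every leaf of a star to meet a neighbourhood $N(v)$; both require only that each star has at least two leaves, which holds for both admissible $m$. The leaf count enters the contradiction bound as $(\Delta(G)-m+1)+(m-1)+1=\Delta(G)+1>\Delta(G)\ge |N(v)|$, independently of $m$, so the conclusions ``no $S$--$L$ edge'' and ``$S$ is complete'' survive in every case.

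The genuinely new obstacle is excluding the edges inside $L$ when the stars are short. The even-case proof forbids an $L$--$L$ edge by producing a larger matching, but this rerouting needs a third, unmatched leaf, i.e.\ at least three leaves per star; when $m=r+2$ and $\Delta(G)=5$ each star has only two leaves and the rerouting fails for an edge joining a matched leaf to its unmatched sibling. To close this gap I would switch from matching to domination. Having already established that $S$ is complete and that there are no $S$--$L$ edges, each centre now has degree exactly $(\Delta(G)-m+1)+(m-1)=\Delta(G)$ in the total coalition graph, so Lemma~\ref{l:cover} applies: for a centre $s_k$ and a witness $v_k$ not dominated by $C_{s_k}$, the $\Delta(G)$ neighbours of $v_k$ in $G$ are distributed one per neighbouring class, and those classes are precisely the $\Delta(G)-m+1$ leaves of $s_k$'s own star together with the $m-1$ other centres. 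Consequently $N(v_k)$ meets no leaf class of any star other than $s_k$'s.

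Now suppose $\ell\ell'$ were an edge with $\ell,\ell'\in L$. Since $m\ge r+1\ge 3$ and $\ell,\ell'$ lie in at most two stars, there is a star with centre $s_k$ having neither $\ell$ nor $\ell'$ among its leaves; for this $k$ the leaf classes $C_\ell$ and $C_{\ell'}$ are disjoint from $N(v_k)$, so the total dominating set $C_\ell\cup C_{\ell'}$ fails to dominate $v_k$, a contradiction. This kills every $L$--$L$ edge, same-star or cross-star, in one stroke, covering in particular the two-leaf case; for $m=r+1$ (and for $m=r+2$ when $\Delta(G)\ge 7$) it simply reproves what the matching argument already gives. Hence, once $m$ is fixed, the total coalition graph is exactly the configuration of $m$ stars with $\Delta(G)-m+1$ leaves each whose centres are pairwise adjacent, determined up to isomorphism. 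I expect the main subtlety to be the two-leaf bookkeeping, specifically the verification that a witness star $s_k$ with $N(v_k)$ avoiding the offending leaf classes always exists, which is precisely where $\Delta(G)\ge 5$ (guaranteeing $m\ge 3$ stars) is used.
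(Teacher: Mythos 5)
Your proposal follows the paper's overall strategy (treat $m=\nu(TCG(G,\Psi))$ as a fixed parameter and rerun the arguments of Theorem~\ref{t:stareven}), but your handling of the $L$--$L$ edges is genuinely different from the paper's, and in fact more careful. The paper's proof simply asserts that for odd $\Delta(G)\ge 5$ the number of leaves is ``at least 3'' in both matching cases; this is false when $\Delta(G)=5$ and $m=4$, where each star has only $\Delta(G)-m+1=2$ leaves, and the matching-augmentation argument indeed cannot exclude an edge joining the two leaves of a single star. You correctly diagnose exactly this failure and repair it: once $S$ is complete and there are no $S$--$L$ edges, every centre has degree $(\Delta(G)-m+1)+(m-1)=\Delta(G)$ in the total coalition graph, so by Lemma~\ref{l:cover} its neighbourhood is a vertex cover; choosing a star containing neither endpoint of a putative $L$--$L$ edge (possible since $m\ge r+1\ge 3$) exhibits an edge missed by that cover, a contradiction. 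This is precisely the tool the paper itself uses later for $\Delta(G)=4$, so your argument both fixes the paper's error at $\Delta(G)=5$ and subsumes the cases where the paper's matching argument suffices.

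One gap remains, and it is one you share with the paper: the opening claim that ``by the equality analysis in the proof of Theorem~\ref{t:quadratic}, the total coalition graph consists of precisely $m$ vertex-disjoint stars'' is not fully justified in exactly the problematic case $\Delta(G)=5$, $m=4$. The counting in Theorem~\ref{t:quadratic} allows, for each matching edge, either up to $\Delta(G)-m$ additional vertices attached to one endpoint (a star) or a single additional vertex adjacent to both endpoints (a triangle); equality forces $\Delta(G)-m$ additional vertices per edge, which excludes the triangle option only when $\Delta(G)-m\ge 2$. When $\Delta(G)-m=1$, i.e. $\Delta(G)=5$, $m=4$, a component could a priori be a triangle, and then your centre/leaf dichotomy does not even parse. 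The hole can be closed by the same hitting count you use elsewhere: if some component $\{A,B,D\}$ were a triangle, then all three pairs inside it are coalitions, so for any witness vertex $v$ of a class outside that component at least two of $A,B,D$ must meet $N(v)$; taking $v$ to be a witness for the centre of a path component, its neighbourhood is met by its own two leaves, by at least two classes of the triangle, and by at least one class of each remaining component, i.e. by at least six disjoint classes, exceeding $\Delta(G)=5$; if instead all four components are triangles, a witness for $A$ gives the count $2+2\cdot 3=8>5$. You should insert this step before speaking of centres and leaves; with it, your proof is complete and strictly stronger than the one in the paper.
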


\begin{proof}
    By the proof of Theorem \ref{t:quadratic}, we know that $\nu(H)$ is equal to either $\left \lfloor \frac{\Delta(G)+2}{2} \right \rfloor$ or $\left \lceil \frac{\Delta(G)+2}{2} \right \rceil$. Once the size of the maximum matching in the total coalition graph is fixed then by the observations above the existence of the further edges depends again only on the number of leaves in each star. Since $\Delta(G)\ge 5$, the number of leaves in both cases are at least 3 thus the only further edges are spanned by $S$ and the subgraph spanned by $S$ is complete.
\end{proof}

It remains to consider the case $\Delta(G)\le 4$. Here the optimal total coalition graph is not necessarily unique even if the size of the maximum matching is fixed. For instance, the total coalition graphs corresponding to the two total coalition partitions illustrated in Figure~\ref{fig:opt} are not isomorphic, although both are optimal and have three independent edges.

\section{Possible optimal total coalition graphs if $\Delta(G)\le 4$}

Here we investigate the remaining cases if $TC(G)=\left \lfloor \frac{\Delta(G)^2}{4} + \Delta(G) + 1 \right \rfloor$ and $\Delta(G)\le4$. If $\Delta(G)=1$, then an optimal total coalition partition has 2 classes and they form a total coalition, hence the corresponding total coalition graph $K_2$ is uniquely determined.

Let us assume $\Delta(G)=2$.
Since $TC(G)=4$, there are two independent edges in the total coalition graph by Theorem~\ref{t:quadratic}.
Since the maximum degree of the total coalition graph is also at most 2 by Lemma \ref{l:maxfok}, there are only three candidates: $C_4$, $\overline{C_4}$, $P_4$ for the total coalition graph.
Figure~\ref{fig:delta2} shows that all of these possible total coalition graphs are feasible. However, it is straightforward that only $C_4$ is possible if $G$ is connected.

\begin{figure}[!ht]
    \centering
    \includegraphics[width=0.7\textwidth]{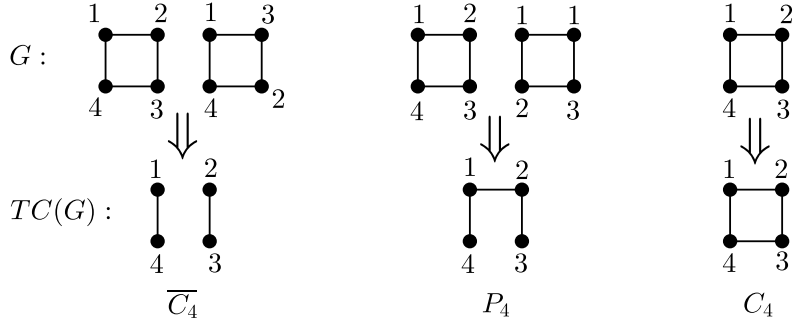}
    \caption{The three possible total coalition graphs if $\Delta(G)=2$ and $TC(G)=4$.}
    \label{fig:delta2}
\end{figure}

Suppose $\Delta(G)=4$. By the proof of Theorem~\ref{t:quadratic}, we know the total coalition graph contains three stars and each of them has 2 leaves. Since these stars have 2 leaves, the observation above yields that $S$ spans a triangle and there are no edges between $S$ and $L$. The subgraph spanned by $L$ must be empty because the centers of the stars have maximum degree by Lemma~\ref{l:maxfok} and their neighbors form a vertex cover by Lemma~\ref{l:cover} and $|S|=3$. Thus the total coalition graph is again determined up to isomoprhism.

The most interesting and complicated case is $\Delta(G)=3$. The size of the maximum matching of the total coalition graph is either 2 or 3 by Remark \ref{r:qodd}. If the size of the maximum matching is 2, then the stars have 2 leaves. Hence the two centers are adjacent, and there are no edges between $S$ and $L$. Similarly to the previous case an edge between the two leaves of the same star contradicts Lemma~\ref{l:cover}. There can be no edges between leaves from different stars either, since that would increase the size of the maximum matching. Therefore, if the size of the maximum matching is 2, then the total coalition graph is uniquely determined.

Lastly, suppose the size of the maximum matching is 3 and $\Delta(G)=3$. We have already seen two possibilities from the two examples in Figure~\ref{fig:opt}. Observe that $K_{3,3}$ is also a feasible total coalition graph if $G=K_{3,3}$ as well and every vertex forms a singleton partition class.\footnote{this property holds for any complete multipartite graph}

By Lemma~\ref{l:maxfok}, the maximum degree of the total coalition graph is at most 3. If the total coalition graph has a vertex $v$ of degree 3, then for any maximum matching $M$ the three neighbors of $v$ form a vertex cover by Lemma~\ref{l:cover}. Hence the three neighbors of $v$ intersect each edge of $M$ exactly once. These restrictions do not determine the total coalition graph, but significantly narrow down the candidates. There are 13 non-isomorphic possible optimal total coalition graphs remaining, as illustrated in Figure~\ref{fig:maradek}.

\begin{figure}[!ht]
    \centering
    \includegraphics[width=\textwidth]{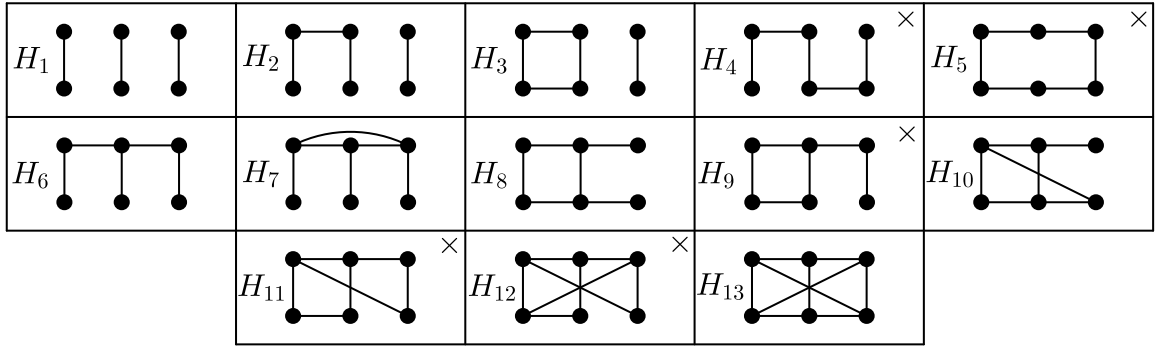}
    \caption{The optimal total coalition graph candidates for $\Delta(G)=3$ and $\nu(H)=3$.}
    \label{fig:maradek}
\end{figure}

We have already realised 3 of them ($H_1$,$H_6$, and $H_{13}\simeq K_{3,3}$).
We show that most of them are realisable with the exception of the 5 graphs marked by $\times$.

\begin{lemma}\label{lem:6kor}
    If an optimal total coalition graph $H$ contains a path $V_1 V_2 V_3 V_4 V_5 V_6$ on 6 vertices, then the edge $V_2 V_5$ must be present in $H$, too. If $V_1 V_2 V_3 V_4 V_5 V_6$ form a cycle of length $6$, then $H\simeq H_{13}\simeq K_{3,3}$.
\end{lemma}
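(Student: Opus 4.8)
The plan is to translate the total-domination conditions that the path imposes into a vertex-cover condition on the index set $\{1,\dots,6\}$, and then read off which index pairs are forced to be edges of $H$. Throughout, write $C_i$ for the partition class corresponding to $V_i$, recall that $\Delta(G)=3$ in this case, and for a vertex $w\in V(G)$ let $S_w\subseteq\{1,\dots,6\}$ be the set of indices $i$ for which $w$ has a neighbour in $C_i$. Since $w$ has at most $\Delta(G)=3$ neighbours, $|S_w|\le 3$. The first step is the observation that each path edge $V_iV_{i+1}$ means $C_i\cup C_{i+1}$ is a total dominating set, so $w$ has a neighbour in $C_i\cup C_{i+1}$, i.e. $S_w$ meets $\{i,i+1\}$. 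Hence every $S_w$ is a vertex cover of the index path $1\,2\,3\,4\,5\,6$ of size at most $3$.

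For the path statement I would then argue as follows. Any vertex cover of $1\,2\,3\,4\,5\,6$ avoiding both $2$ and $5$ must contain $1$ (to cover $12$), $3$ (to cover $23$), $4$ (to cover $45$) and $6$ (to cover $56$), so it has size at least $4$. As every $S_w$ has size at most $3$, no $S_w$ can avoid both $2$ and $5$; that is, every $w\in V(G)$ has a neighbour in $C_2\cup C_5$. Therefore $C_2\cup C_5$ is a total dominating set, and since neither $C_2$ nor $C_5$ is itself total dominating (they are classes of a total coalition partition), they form a total coalition, i.e. $V_2V_5\in E(H)$.

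For the cycle statement I would apply the path statement to the three cyclic rotations of the path contained in the $6$-cycle. The rotation $V_2V_3V_4V_5V_6V_1$ yields the edge $V_3V_6$, and $V_3V_4V_5V_6V_1V_2$ yields $V_1V_4$; together with $V_2V_5$ these are the three ``long diagonals''. Adding them to the six cycle edges already produces a copy of $K_{3,3}$ on the bipartition $\{V_1,V_3,V_5\}\cup\{V_2,V_4,V_6\}$, which is $3$-regular. Since $\Delta(H)\le\Delta(G)=3$ by Lemma~\ref{l:maxfok}, no further edge can be present, so $H\simeq K_{3,3}$.

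The conceptual heart of the argument --- the step I expect to be the main obstacle to spot rather than to execute --- is the reformulation: recognising that the only information the path supplies is that each ``class-neighbourhood'' $S_w$ must cover the index path, and that the degree bound $|S_w|\le 3$ forces these covers to be minimum. Once this is in place, the edge $V_2V_5$ appears precisely because $\{2,5\}$ is the pair of non-consecutive indices that every minimum vertex cover of the path is obliged to hit, which is exactly why the statement singles out this edge; the cycle case then collapses to $K_{3,3}$ by symmetry and the degree cap. The only point needing a little care is to confirm that no individual class is a total dominating set, so that a total dominating union genuinely certifies an edge of $H$, and this is immediate from the definition of a total coalition partition.
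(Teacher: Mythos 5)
Your proof is correct and is essentially the paper's own argument in different clothing: your observation that a vertex cover $S_w$ avoiding $\{2,5\}$ must contain all of $\{1,3,4,6\}$ and hence have size at least $4$ is exactly the paper's contradiction that a vertex not dominated by $C_2\cup C_5$ would need neighbours in the four pairwise disjoint classes $C_1,C_3,C_4,C_6$, exceeding the degree bound $3$. The cycle case --- rotating the path to force all three long diagonals and then invoking the degree cap $\Delta(H)\le\Delta(G)=3$ from Lemma~\ref{l:maxfok} to conclude $H\simeq K_{3,3}$ --- also coincides with the paper's proof.
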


\begin{proof}
    Consider the path $V_1 V_2 V_3 V_4 V_5 V_6$ on 6 vertices.
    Suppose to the contrary $V_2 \cup V_5$ is not a total dominating set. Thus there exists a vertex $v$ that is not dominated by $V_2 \cup V_5$. Since $V_1 V_2$ and $V_2 V_3$ are edges of $H$, therefore $V_1 \cap N(v)\neq \emptyset \neq V_3 \cap N(v)$. Similarly, since $V_4 V_5$ and $V_5 V_6$ are also edges of $H$, therefore $V_4 \cap N(v)\neq \emptyset \neq V_6 \cap N(v)$. But this is a contradiction since the degree of $v$ cannot be greater than 3.

    If $V_1 V_2 V_3 V_4 V_5 V_6$ is a cycle of length 6, then the same argument\footnote{by rotating the path cyclically} shows that $V_1 V_4$, $V_2 V_5$ and $V_3 V_6$ are all mandatory in $H$.
    However, the maximum degree is at most 3 in $H$ by Lemma \ref{l:maxfok}, thus $H\simeq H_{13}$.
\end{proof}

By Lemma \ref{lem:6kor}, $H_4$, $H_5$, $H_9$, $H_{11}$ and $H_{12}$ are not realisable since they contain either a path of length 5 or a cycle of length 6 but some of the mandatory edges are missing. The rest of the possible optimal total coalition graphs can be realised, see Figures \ref{fig:maradek1}, \ref{fig:maradek3}, \ref{fig:maradek2}.

\begin{figure}[!h]
    \centering
    \includegraphics[width=0.65\textwidth]{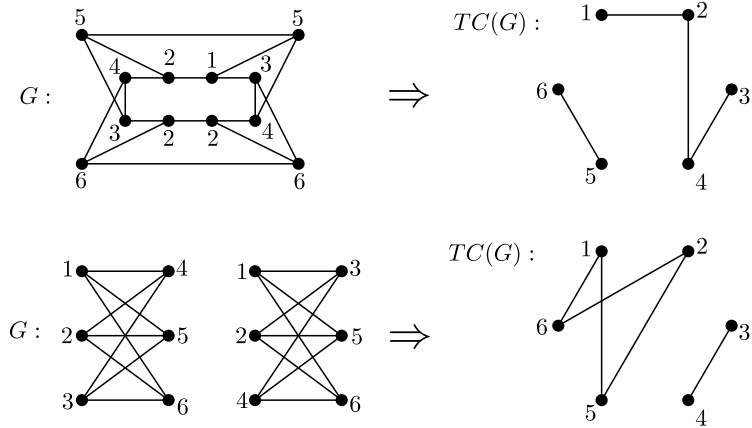}
    \caption{$H_2$ and $H_3$ are realisable.}
    \label{fig:maradek1}
\end{figure}

\begin{figure}[!h]
    \centering
    \includegraphics[width=0.55\textwidth]{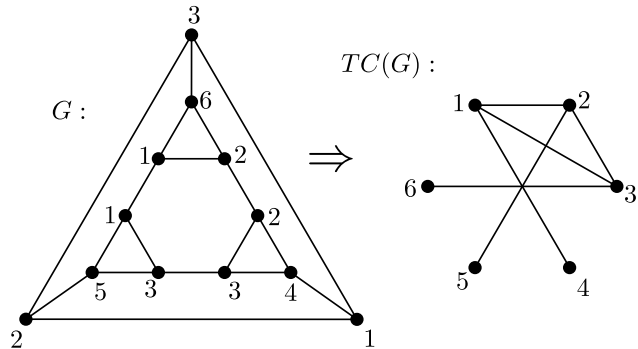}
    \caption{$H_7$ is also realisable.}
    \label{fig:maradek3}
\end{figure}

\begin{figure}[!h]
    \centering
    \includegraphics[width=0.9\textwidth]{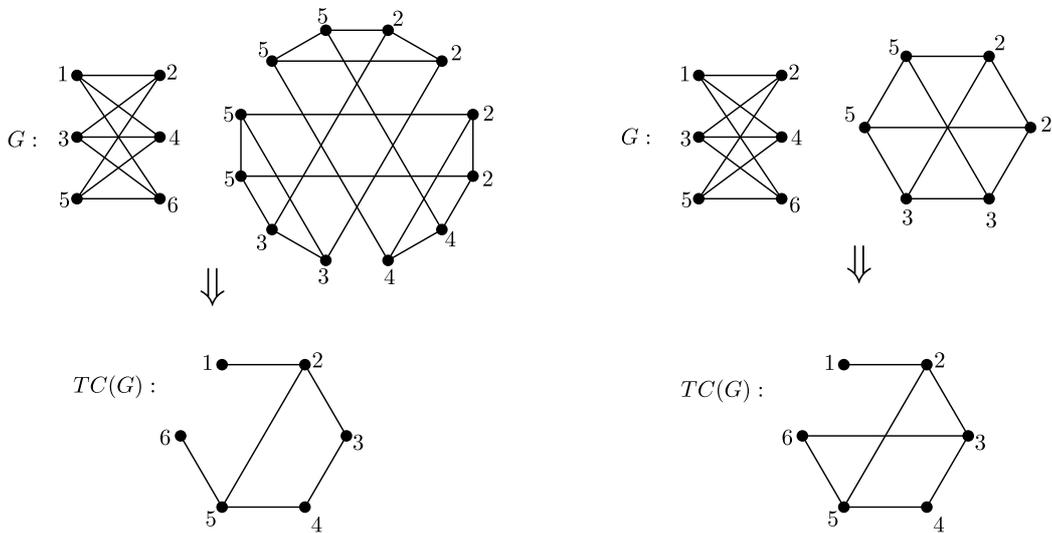}
    \caption{$H_8$ and $H_{10}$ are realisable, as well.}
    \label{fig:maradek2}
\end{figure}

\eject


\begin{thebibliography}{00}

\bibitem{TCG} \textsc{S. Alikhani, D. Bakhshesh, H. Golmohammadi~}
Total coalitions in graphs.\\\url{https://arxiv.org/pdf/2211.11590.pdf}, arXiv (2022)

\bibitem{CDH} \textsc{E.J. Cockayne, R.M. Dawes, S.T. Hedetniemi~}
Total domination in graphs. {\it Networks}, {\bf 10} (1980), 211–-219.

\bibitem{CH} \textsc{E.J. Cockayne, S. T. Hedetniemi~} Towards a theory of domination in graphs. {\it Networks}, {\bf 7} (1977), 247–-261.

\bibitem{GH} \textsc{W. Goddard, M.A. Henning~}
Independent domination in graphs: A survey and recent results. {\it Discrete Mathematics}, {\bf 313} (7) (2013), 839–-854.

\bibitem{GH2} \textsc{W. Goddard, M.A. Henning~}
Thoroughly dispersed colorings. {\it Journal of Graph Theory}, {\bf 88} (1) (2018), 174–-191.

\bibitem{HHHMM} \textsc{T.W. Haynes, J.T. Hedetniemi, S.T. Hedetniemi, A.A. McRae and R. Mohan~}
Introduction to coalitions in graphs. {\it AKCE Int. J. Graphs Combin.} {\bf 17} (2) (2020), 653–-659.

\bibitem{HHHMM2} \textsc{T.W. Haynes, J.T. Hedetniemi, S.T. Hedetniemi, A.A. McRae and R. Mohan~}
Upper bounds on the coalition number. {\it Austral. J. Combin.} {\bf 80} (3) (2021), 442–-453.

\bibitem{HHHMM3} \textsc{T.W. Haynes, J.T. Hedetniemi, S.T. Hedetniemi, A.A. McRae and R. Mohan}
Coalition Graphs.
{\it Commun. Comb. Optim.} Vol. 8 no.2 (2023), 423--430.

\bibitem{HHS1} \textsc{T.W. Haynes, S.T. Hedetniemi, P.J. Slater~}
Fundamentals of Domination in Graphs. {\it Marcel Dekker}, New York (1998)

\bibitem{HHS2} \textsc{T.W. Haynes, S.T. Hedetniemi, P.J. Slater~}
Domination in Graphs: Advanced Topics. {\it Marcel Dekker}, New York (1998)

\bibitem{HY} \textsc{M.A. Henning and A. Yeo~} Total domination in graphs. {\it Springer Monographs in
Mathematics}, (2013)

\bibitem{Henning} \textsc{M.A. Henning~}
A survey of selected recent results on total domination in graphs. {\it Discrete Mathematics}, {\bf 309} (1) (2009), 32–-63.

\end{thebibliography}
\end{document}